\let\color@begingroup\relax
\let\color@endgroup\relax}{}
\def\fix@ieeecolor@hbox#1{
\hbox{\color@begingroup#1\color@endgroup}}
\patchcmd\@makecaption{\hbox}{\fix@ieeecolor@hbox}{}{\FAILED}
\patchcmd\@makecaption{\hbox}{\fix@ieeecolor@hbox}{}{\FAILED}
\def\BibTeX{{\rm B\kern-.05em{\sc i\kern-.025em b}\kern-.08em
    T\kern-.1667em\lower.7ex\hbox{E}\kern-.125emX}}
\newtheorem{theorem}{THEOREM}[section]
\newtheorem{definition}[theorem]{Definition}
\newtheorem{remark}[theorem]{Remark}
\newtheorem{lemma}[theorem]{Lemma}
\newtheorem{example}[theorem]{Example}
\begin{document}

\title{\huge{A geometric approach for stability analysis of delay systems——Applications to asymmetric network dynamics}}

\author{Shijie Zhou,~Luan Yang,~Xuzhe Qian, ~and~Wei Lin,~\IEEEmembership{Senior Member,~IEEE}
\thanks{Shijie Zhou and Luan Yang are both with the Research Institute of Intelligent Complex Systems, Fudan University, Shanghai 200433, China.}
\thanks{Xuzhe Qian is with the School of Mathematical Sciences, Fudan University, Shanghai 200433, China.}
\thanks{Shijie Zhou, to whom correspondence should be addressed. Tel. +86-21-55665141. Fax. +86-21-65646073.  E-mail: \url{sjzhou14@fudan.edu.cn}.}
\thanks{Wei Lin is with the Research Institute of Intelligent Complex Systems, the School of Mathematical Sciences, and the Shanghai Center for Mathematical Sciences, Fudan University, Shanghai 200433, China. E-mail: \url{wlin@fudan.edu.cn}.}}

\maketitle

\begin{abstract}
Investigating the network stability or synchronization dynamics of multi-agent systems with time delays is of significant importance in numerous real-world applications. Such investigations often rely on solving the transcendental characteristic equations (TCEs) obtained from  linearization of the considered systems around specific solutions. While stability results based on the TCEs with real-valued coefficients induced by symmetric networks in time-delayed models have been extensively explored in the literature, there remains a notable gap in stability analysis for the TCEs with complex-valued coefficients arising from asymmetric networked dynamics with time delays.  To address this challenge comprehensively, we propose a rigorously geometric approach.  By identifying and studying the stability crossing curves in the complex plane, we are able to determine the stability region of these systems.    This approach is not only suitable for analyzing the stability of models with discrete time delays but also for models with various types of delays, including distributed time delays. \textcolor{black}{Additionally, it can also handle random networks.} We demonstrate the efficacy of this approach in designing delayed control strategies for car-following systems, mechanical systems, and deep brain stimulation modeling, where involved are complex-valued TCEs or/and different types of delays. All these therefore highlight the broad applicability of our approach across diverse domains. \end{abstract}

\begin{IEEEkeywords}
time delay; distributed delay; stability; transcendental equation; random and asymmetric network
\end{IEEEkeywords}

\IEEEpeerreviewmaketitle

\section{Introduction and notations}

Long-time behaviors of complex dynamical systems has been a subject of intensive research. Various types of theories describing such behaviors were developed systematically, including the Lyapunov stability theory,  LaSalle's invariance principle and its variants 
\cite{b29,b30,b31}, the center manifold and bifurcation theories\cite{b32,b33,b34}, and the chaos theory.   Time delay, an inherent characteristic of real-world systems owing to the physical distance signals must travel, often leads to a more diversity of long-time behaviors\cite{b21,b42,b43}.  This naturally triggered an extensive amount of studies on the dynamics of time-delay systems. Among the studies, an elementary aspect is the examination of stability in  linear time-delay systems, which was typically analyzed using frequency-domain approaches\cite{b2, b14}. These approaches involve the analysis of the transcendental characteristic equations (TCEs),  bringing challenges in both analytical and numerical manners. The seminal work, as outlined in Ref.~\cite{b14}, relied on the $\tau$-decomposition concept \cite{b79,b80,b81,b12,b13}, which can be seen as a particular instance of the $D$-partition approach \cite{b25, b26, b27}.  Since then, numerous researchers have made significant contributions.  For instance, the stability criteria for systems with large delays were established in Refs.~\cite{b22,b23}, while the absolute stability for systems with discrete-time delays were investigated in Refs.~\cite{b16,b17,b24}. Additionally, the geometric stability switch criteria for systems with delay-dependent parameters were developed in Refs.~\cite{b15,b18,b19}.  More recently, the classical frequency domain analysis also was extended for systems with time-varying delays in Refs.~\cite{b57,b58}.

The $\tau$-decomposition approach encompasses two crucial aspects:  The exhaustive identification of critical imaginary roots and the examination of asymptotic behavior of these critical imaginary roots. This approach was widely employed in the stability analysis of various discrete time-delay systems \cite{b2,b7,b8,b9,b14,b36,b37,b28,b47,b48,b38,b50,b51}. Furthermore, it was extended to study systems with uniformly distributed delay as well \cite{b4,b6}. Notably, a novel frequency-sweeping framework was put forth in the work of Ref.~\cite{b2}, leading to three noteworthy advancements: A more comprehensive classification scheme for time-delay systems that accounts for regular singularities, an introduction of a general invariance property, and a successful resolution of the entire stability problem. Moreover, new algebraic and geometric analyses emerged, specifically focusing on examining the dynamics of local stability crossing curves~\cite{b5}.

In spite of the above-mentioned advances and extensions, previous contributions primarily centered on examining the stability of quasi-polynomial TCEs in the form of $F(\lambda,\tau)=\sum_{n=0}^q{b_n(\lambda)}{\rm e}^{-n\tau\lambda}=0$, where $b_i(\lambda)$ with $0\leq i\leq q$ are polynomials of real-valued coefficients and with respect to $\lambda$.  Actually, these TCEs originate from discrete time-delay systems $\dot{\bm{z}}=\sum_{l=0}^m\bm{B}_l\bm{z}(t-l\tau)$, where $\bm{B}_l$ are real matrices\cite{b2,b8,b28}. 
However, in real applications, it is inadequate to only consider real matrices $\bm{B}_l$ and real-valued coefficients in $b_i(\lambda)$.  For example, in a multi-agent system (MAS) described by $\dot{\bm{x}_i}=\bm{Q}\bm{x}_i+\sum_{j=1}^Na_{ij}\bm{x}_j(t-\tau)$ for $i=1,\cdots,N$ and with the Laplacian network matrix $\bm{J}\triangleq\{a_{ij}\}_{N\times N}$, we often transform its consensus problem into a stability problem by decoupling the dynamics into the corresponding orthogonal manifold of lower dimensions using the master stability function as: $\dot{\bm{z}}_k=\bm{Q}\bm{z}_k+\mu_k \bm{z}_k(t-\tau)$ where $k=2,\cdots,N$ and $\mu_k$ led by $\mu_1=0$ are the eigenvalues of the matrix $\bm{J}$ (see details in Section \ref{MSF}).  Guaranteeing the stability as well as the consensus thus requires all the eigenvalues $\mu_k$ of $\bm{J}$ (except for $0$) to be located within the stability region 
\begin{equation}\label{1056}
\Omega\triangleq \left\{L\in\mathbb{C}~\bigg|~\dot{\bm{z}}=\bm{Q}\bm{z}+L\bm{z}(t-\tau)~\mbox{is stable} \right\}.
\end{equation}
Most of the previous studies have made significant contributions by assuming that the network matrix $\bm{J}$ is symmetric \textcolor{black}{and deterministic}\cite{b47,b50,b78}, so that all the eigenvalues $\mu_l$ are real \textcolor{black}{and obtainable}, simplifying the investigation on the set $\Omega\cap\mathbb{R}$.  However, it is practically necessary to consider the asymmetric and random network matrix $\bm{J}$, since most real networks, including the social networks\cite{b60}, the webpage links\cite{b61}, and the gene regulatory networks\cite{b62}, are not only asymmetric \textcolor{black}{but also exhibit uncertainty}. Therefore,  $\mu_k$, the nonzero eigenvalues, are not often simply real \textcolor{black}{and may not be computable explicitly}. This urges us to comprehensively investigate the stability region $\Omega$ across the entire complex plane.

Moreover, modeling only using the discrete time-delay systems easily neglects the potential memory effects in the system dynamics over a specific time interval\cite{b35}. In the context of the car-following system\cite{b45,b46} which investigated the movement patterns of individual cars within a system, the behaviors of these cars may rely on historical information distributed over a time interval (see Example \ref{carfollowing3}). Another representative example pertains to the logistic equation with distributed delays stemming from cell biology\cite{b55},  where the proliferative cells at a given time instant are precisely those cells that entered the proliferative subpopulation within a time interval. Thus, it is of practical significance to introduce distributed time delays into the modeling of the engineering, physical or/and biological systems; however, a comprehensive approach for analyzing this kind of time-delay systems, even with complex-valued coefficients, remains largely unaddressed.





This article, therefore, aims to develop a geometric approach for analyzing the stability of a general group of linear time-invariant systems where complex-valued parameters and various types of time delays are simultaneously are taken into account.  The approach to be developed will entail the identification and analysis of the stability crossing curves for assessing system stability.  To demonstrate this approach in control problems, we will use three representative examples and design the corresponding delayed control strategies to achieve consensus/stability or eliminate synchronization in time-delay systems with general structures of complex networks.

We  highlight two key contributions of this article as follows:
(1)  rigorous establishment of the geometric approach for the stability analysis of the linear time-invariant systems with both complex-valued coefficients and various types of delays, including \textit{discrete} delays and \textit{distributed} ones, and
(2) offer of valuable insights into the design of the appropriate strategies for controlling the time-delay dynamical complex systems with \textit{asymmetric} networks. 

As for the second above-summarized contribution, we provide an additional illustration.   In study of the consensus or stability problem of a specific MAS, the traditional $\tau$-decomposition approach is computationally demanding for large size $N$, where the stability analysis is conducted based on $N-1$ (or $N$) quasi-polynomials with different coefficients \cite{b47,b51,b45}.  In contrast, our approach only requires to compute the stability crossing curves, making it computationally efficient (see Example \ref{carfollowing3}, Fig. \ref{fig15}).   Moreover, our approach is well-suited for coping with \textit{random} networks, where the exact eigenvalues of the network matrix are difficult to obtain, but an approximation of the eigenvalue distribution is feasible (see Example \ref{MAS0}, Fig. \ref{figm}).




We proceed as follows. In Section \ref{sec2}, we lay out the fundamental concepts and preliminaries for this study. In Section \ref{sec2.5}, we establish the geometric approach, entailing the identification and analysis of stability crossing curves, to analyze the TCEs.   In Section \ref{sec5.3}, we employ the established approach to stability analysis for scalar delay differential equations (DDEs).  In Section \ref{sec5.5}, we provide several illustrative examples, including the consensus/stability of the MASs and the elimination of synchronization in coupled oscillators, to demonstrate the efficacy of the proposed geometric approach in applications. In Section \ref{sec6}, we present some concluding remarks and suggest possible future directions for further research.

\textbf{Notations}. In this article, we use standard notations and terminologies.  Specifically,  $\mathbb{R}$ (resp., $\mathbb{R}_{+}$, $\mathbb{R}_{-}$) denotes the set of all real (resp.,  positive, negative) numbers, and $\mathbb{C}_+$ (resp., $\mathbb{C}_-$, $\mathbb{C}_0$) represents the set of complex numbers with positive (resp., negative, zero) real parts.  Denote by $\overline{\mathbb{R}}_+\triangleq\mathbb{R}_+\cup\{0\}$ and by $\overline{\mathbb{C}}_+\triangleq\mathbb{C}_+\cup\mathbb{C}_0.$   Also denote by ${\rm i}=\sqrt{-1}$ the imaginary unit,  by ${\rm det}(\cdot)$ the determinant of a matrix, and by $\bm{I}_q$ the identity matrix of dimension $q$.  For $\lambda\in\mathbb{C}$, denote, respectively, by ${\rm Re}\lambda$, ${\rm Im} \lambda$, $|\lambda|$, and $\overline{\lambda}$ the real part, the imaginary part, the norm, and the conjugate number of $\lambda$. For $x\in\mathbb{R}$,  denote by ${\rm Sgn}(x)$ the sign of $x$, where ${\rm Sgn}(x)=1$, $0$, and $-1$, respectively, for $x>0$, $x=0$, and $x<0$.   As usual, $\mathbb{N}$ (resp., $\mathbb{N}^*$) is the set of non-negative (resp., positive) integers. For the function $F(\lambda,L)$ defined in \eqref{character}, denote by $\partial_{\lambda}F(\lambda,L)$ and $\partial_{L}F(\lambda,L)$ the partial derivative with respect to $\lambda$ and $L$, respectively.

\section{Preliminaries}\label{sec2}

Enlightened by \eqref{1056}, we consider a more general class of $q$-dimensional and time-delay systems, described by
\begin{equation}\label{2}
\dot{\bm{z}}=\bm{Q}(L)\bm{z}+\bm{B}(L)\int_0^{+\infty}\bm{z}(t-\tau)h(\tau){\rm d}\tau,
\end{equation}
where $\bm{z}(t)\in\mathbb{C}^q$ is the state variable, $\bm{Q}(L)$ and $\bm{B}(L)$ are both \textit{analytic} and $q\times q$ matrix-valued function with respect to the complex-valued parameter $L$,
and $h(\tau)$, the distribution of the time delay, belongs to the function family $\mathscr{F}([0,+\infty),\overline{\mathbb{R}}_+)\triangleq\mathscr{D}([0,+\infty),\overline{\mathbb{R}}_+)\cup\mathscr{M}([0,+\infty),\overline{\mathbb{R}}_+)$. Here, $\mathscr{D}([0,+\infty),\overline{\mathbb{R}}_+)$ comprises of all the Dirac delta functions on $[0,+\infty)$, and $\mathscr{M}([0,+\infty),\overline{\mathbb{R}}_+)$ comprises of all the Borel measurable nonnegative functions on $[0,+\infty)$ normalized by $\int_0^{+\infty}h(\tau){\rm d}\tau=1$.

\begin{remark}
To enhance readability, we provide a few examples of $h(\tau)$ here to illustrate system \eqref{2}. For the sake of simplicity, we assume as $\bm{Q}(L)\equiv \bm{Q}$ and $\bm{B}(L)=L\bm{I}_q$.  For example, when $h(\tau)\in\mathscr{D}([0,+\infty),\overline{\mathbb{R}}_+)$ is specified as $\delta(\tau-\tau')$, system \eqref{2} becomes $\dot{\bm{z}}=\bm{Q}\bm{z}+L\bm{z}(t-\tau')$, which actually is the discrete time-delay system considered in \eqref{1056} and
the simplest case that has been extensively investigated in the past literature.  On the other hand, $h(\tau)\in\mathscr{M}([0,+\infty),\overline{\mathbb{R}}_+)$ corresponds to the system with distributed delays. One typical example is 
$$
h(\tau)=\begin{cases}{1}/{A}, & \tau \in[a, a+A], \\ 0, & \tau\notin[a, a+A],\end{cases}
$$ 
which corresponds to uniformly distributed delays. Another common example is $h(\tau)=\frac{n^n}{(n-1)! T^n}\tau^{n-1}{\rm e}^{-\frac{\tau n}{T}}$, which corresponds to the Gamma distributed delays.
\end{remark}

Our objective is to describe explicitly the stability region $\Omega$ for system \eqref{2} which is defined as follows.

\begin{definition}
System \eqref{2} is said to be \textbf{stable}, if $\lim_{t\to+\infty}\bm{z}(t)=\bm{0}$ for all solutions of system \eqref{2}. 
\end{definition}
\begin{definition}
The \textbf{stability region} for system \eqref{2} is denoted by
$\Omega\triangleq \left\{L\in\mathbb{C}~\big| ~\mbox{System \eqref{2} is stable} \right\}$.
\end{definition}

We give the definition of characteristic function and characteristic equation as follows (refer to \cite[Chapter 7, Lemma 2.1]{b21}).
\begin{definition}
The \textbf{characteristic function} of system \eqref{2} is given by 
\begin{equation}\label{character}
F(\lambda,L)\triangleq {\rm det}\left[\lambda \bm{I}_q-\bm{Q}(L)-\bm{B}(L)\int_0^{+\infty}{\rm e}^{-\lambda \tau}h(\tau){\rm d}\tau\right].
\end{equation}
The equation $F(\lambda,L)=0$ with respect to complex variable $\lambda$ is said to be the \textbf{characteristic equation} of system \eqref{2}.
 \end{definition}
 
Note that the characteristic equation of system \eqref{2} 
 does contain the exponential function with respect to $\lambda\in\mathbb{C}$.  Thus, we also call this equation as a TCE as mentioned above.
 
 \begin{definition}
Denote by ${\rm NU}(L)\in\mathbb{N}\cup\{+\infty\}$ the number of the roots of the TCE $F(\lambda,L)=0$ in $\overline{\mathbb{C}}_{+}$. We refer to these roots as \textbf{unstable roots}.
\end{definition} 

According to \cite[Chapter 7, Corollary 6.1]{b21}, we know that the system \eqref{2} is stable if and only if there is no root of the TCE $F(\lambda,L)=0$ in $\overline{\mathbb{C}}_+$.  Therefore, the stability region for system \eqref{2} must satisfy $\Omega=\left\{L\in\mathbb{C}~\big|~{\rm NU}(L)=0\right\}$.

\begin{definition}
For a region $\Theta\subset \mathbb{C}$, if ${\rm NU}(L)=a$ for all $L\in\Theta$, then we say ${\rm NU}(\Theta)=a$.
\end{definition}

\section{Geometric approach to analyze the characteristic equation}\label{sec2.5}

It is challengeable to solve the TCE $F(\lambda,L)=0$ either numerically or theoretically. To address this challenge, we develop a geometric approach to analyze it. We study how ${\rm NU}(L)$ changes with $L$ in $\mathbb{C}$. Intuitively speaking, as $L$ varies continuously, the value of ${\rm NU}(L)$ changes only when a root crosses over $\mathbb{C}_0$. Inspired by this speculation, we first give the definition of stability crossing curves and the definition of critical imaginary root, respectively.

\begin{definition}\label{SCCs}
Referring to system \eqref{2} or the characteristic function \eqref{character}, we denote by 
$$
\mathscr{A}\triangleq \left\{L\in\mathbb{C}~\Big|~\mbox{There exists}~\lambda\in\mathbb{C}_0~\mbox{such that}~F(\lambda,L)=0\right\}.
$$ 
This set can be locally parameterized as a curve $L=L(\beta)$, where $F({\rm i}\beta,L(\beta))=0$. Consequently, we refer to these parameterized curves as the \textbf{stability crossing curves (SCCs)}. Furthermore, for each $L\in\mathscr{A}$, we refer to the root $\lambda\in\mathbb{C}_0$ of the TCE $F(\lambda,L)=0$ as  the \textbf{critical imaginary root}.
\end{definition}

\subsection{Root continuity argument }\label{sec3}
In this subsection, we prove the root continuity argument, which states that the values of ${\rm NU}(L)$ only change at the SCCs. This theorem plays a fundamental role in developing the geometric approach.

\begin{figure}
\begin{center}
\centering\includegraphics[width=0.18\textwidth]{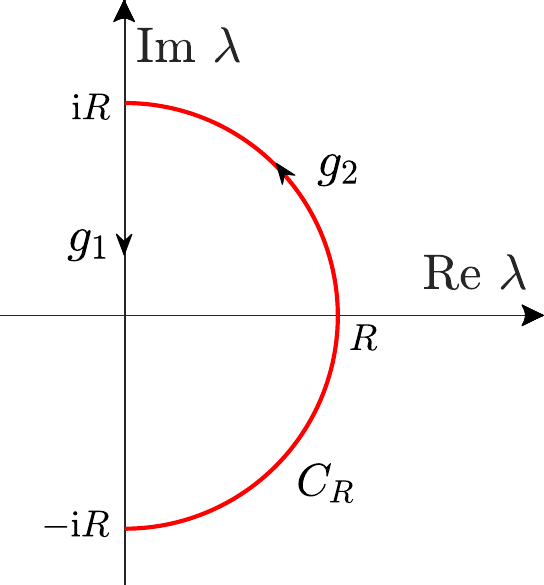}
\caption{According to Theorem \ref{theorem1}, all unstable roots of the TCE $F(\lambda,L)=0$ with $L\in\Theta$ are located within a semicircular bounded by the contour $C_R=g_1\cup g_2$ and vary continuously with respect to $L$. From this, it follows that the value of ${\rm NU}(L)$ changes only if a root crosses the imaginary axis $\mathbb{C}_0$, indicating that these changes only occur at the SCCs.} \label{fig11}
\end{center}
\end{figure}

\begin{theorem}\label{theorem1}
For a bounded connected region $\Theta\subset \mathbb{C}$, if $\mathscr{A}\cap\Theta=\emptyset$, then ${\rm NU}(L)$ keeps constant in $\Theta$.
\end{theorem}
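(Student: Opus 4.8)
The plan is to establish this as a consequence of the classical root-continuity for retarded functional differential equations, combined with a uniform a priori bound localizing all unstable roots in a fixed compact set. First I would recall from \cite[Chapter 7]{b21} that the roots of $F(\lambda,L)=0$ depend continuously on $L$ in the following precise sense: since $\bm{Q}(L)$ and $\bm{B}(L)$ are analytic in $L$ and $\int_0^{+\infty}{\rm e}^{-\lambda\tau}h(\tau){\rm d}\tau$ is entire in $\lambda$ for $h\in\mathscr{F}$ (using $\int h=1$ and $h\geq 0$, the integral converges and is analytic for ${\rm Re}\,\lambda\geq 0$, in fact on a right half-plane), the function $F(\lambda,L)$ is jointly continuous, and analytic in each variable separately. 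Hence for any simple closed contour $\gamma$ in the $\lambda$-plane avoiding zeros of $F(\cdot,L_0)$, the argument principle gives that the number of zeros enclosed is locally constant in $L$; zeros can only appear or disappear by crossing $\gamma$ or by escaping to infinity.

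The second ingredient, and the one I expect to be the technical heart, is a uniform bound: I claim there exists $R>0$ (depending on $\Theta$) such that every root $\lambda$ of $F(\lambda,L)=0$ with ${\rm Re}\,\lambda\geq 0$ and $L\in\Theta$ satisfies $|\lambda|<R$. To see this, note that for ${\rm Re}\,\lambda\geq 0$ we have $\left|\int_0^{+\infty}{\rm e}^{-\lambda\tau}h(\tau){\rm d}\tau\right|\leq\int_0^{+\infty}h(\tau){\rm d}\tau=1$, so the matrix $\bm{Q}(L)+\bm{B}(L)\int_0^{+\infty}{\rm e}^{-\lambda\tau}h(\tau){\rm d}\tau$ has operator norm bounded by $\sup_{L\in\overline{\Theta}}\left(\|\bm{Q}(L)\|+\|\bm{B}(L)\|\right)=:M<\infty$, finite because $\Theta$ is bounded and $\bm{Q},\bm{B}$ are analytic hence continuous on the compact set $\overline{\Theta}$. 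Any root $\lambda$ is an eigenvalue of that matrix, so $|\lambda|\leq M$; take $R=M+1$. This is exactly the geometric picture in Fig.~\ref{fig11}: all unstable roots sit inside the half-disk bounded by $C_R=g_1\cup g_2$, where $g_1$ is the segment of $\mathbb{C}_0$ from $-{\rm i}R$ to ${\rm i}R$ and $g_2$ the right semicircle of radius $R$.

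With these two pieces in hand, the argument is: fix $L_0\in\Theta$ and apply the argument principle along $C_R$ (which is legitimate because no root lies on $g_2$ by the bound, and no root lies on $g_1\subset\mathbb{C}_0$ since $L_0\notin\mathscr{A}$ by hypothesis $\mathscr{A}\cap\Theta=\emptyset$). The winding number $\frac{1}{2\pi{\rm i}}\oint_{C_R}\frac{\partial_\lambda F(\lambda,L)}{F(\lambda,L)}{\rm d}\lambda$ equals ${\rm NU}(L)$ for $L$ near $L_0$, and since $F$ has no zeros on $C_R$ for all $L\in\Theta$ (the bound keeps them off $g_2$, the hypothesis keeps them off $g_1$), the integrand is jointly continuous on $C_R\times\Theta$, so this integer-valued integral is continuous, hence locally constant, on $\Theta$. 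As $\Theta$ is connected, ${\rm NU}(L)$ is constant on all of $\Theta$. One subtlety to address explicitly: roots on $\mathbb{C}_0$ are counted in ${\rm NU}$ (since ${\rm NU}$ counts roots in $\overline{\mathbb{C}}_+$), but the hypothesis $\mathscr{A}\cap\Theta=\emptyset$ precisely rules these out for every $L\in\Theta$, so no root ever touches the contour $g_1$ as $L$ moves through $\Theta$ — this is what prevents a discontinuous jump in the count and is where the definition of $\mathscr{A}$ (Definition \ref{SCCs}) is used. The main obstacle is thus not any single hard estimate but rather assembling the standard facts cleanly — in particular justifying the analyticity/continuity of $F$ uniformly in $L$ across both classes $\mathscr{D}$ and $\mathscr{M}$ comprising $\mathscr{F}$, and being careful that the a priori bound $M$ is genuinely uniform over $\Theta$.
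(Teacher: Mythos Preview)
Your proposal is correct and follows essentially the same approach as the paper: both establish a uniform bound $|\lambda|\leq R$ on unstable roots over $\Theta$ using $\left|\int_0^{+\infty}{\rm e}^{-\lambda\tau}h(\tau){\rm d}\tau\right|\leq 1$ for ${\rm Re}\,\lambda\geq 0$, then apply the argument principle on the half-disk contour $C_R$ and conclude by continuity of the integer-valued winding integral on the connected set $\Theta$. The only cosmetic difference is that the paper obtains the bound by expanding ${\rm det}[\lambda\bm{I}_q-\cdots]$ as $\lambda^q$ times a term tending to $1$, whereas you obtain it more directly via the eigenvalue--operator-norm inequality $|\lambda|\leq\|\bm{Q}(L)\|+\|\bm{B}(L)\|$; both routes yield the same conclusion.
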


\begin{proof} The characteristic function \eqref{character} can be expressed explicitly as
$$
\begin{aligned}
F(\lambda,L)&=\lambda^q-\sum_{0\leq k\leq q-1}^{k+j\leq q}P_{k,j}(L)\lambda^k\hat{h}(\lambda)^j \\
&=\lambda^q\left[1-\sum_{0\leq k\leq q-1}^{k+j\leq q}\lambda^{k-q}P_{k,j}(L)\hat{h}(\lambda)^j\right],
\end{aligned}
$$
where $\hat{h}(\lambda) \triangleq\int_0^{+\infty}{\rm e}^{-\lambda \tau}h(\tau){\rm d}\tau$ and $P_{k,j}(L)$ are all continuous functions with respect to $L$. Notice that, for $\lambda\in\overline{\mathbb{C}}_+$, $|\hat{h}(\lambda)|\leq \int_0^{+\infty}|h(\tau)|{\rm d}\tau=1$.   We thus obtain that
$$
\begin{aligned}
&\lim_{|\lambda|\to+\infty,~\lambda\in\overline{\mathbb{C}}_+}\left|\sum_{0\leq k\leq q-1}^{k+j\leq q}\lambda^{k-q}P_{k,j}(L)\hat{h}(\lambda)^j\right| \\
&\leq\lim_{|\lambda|\to+\infty,~\lambda\in\overline{\mathbb{C}}_+}\sum_{0\leq k\leq q-1}^{k+j\leq q}\left|\lambda\right|^{k-q}\left|P_{k,j}(L)\right|\left|\hat{h}(\lambda)\right|^j=0,
\end{aligned}
$$
where the limit is taken uniformly with respect to $L\in\Theta$. Thus, by choosing sufficiently large $R>0$, it follows that $|F(\lambda,L)|\geq |\lambda|^q/2>0$ for $\lambda\in\left\{\lambda\in\overline{\mathbb{C}}_+~\big|~|\lambda|>R\right\}$. This implies that all unstable roots of the TCE $F(\lambda,L)=0$ are located inside a semicircle $\left\{\lambda\in\overline{\mathbb{C}}_+~\big|~|\lambda|\leq R\right\}$.

According to the argument principle\cite[Chapter 3, Theorem 4.1]{b71}, we obtain that
\begin{equation}\label{argument}
{\rm NU}(L)=\dfrac{1}{2\pi{\rm i}}\int_{C_R}\dfrac{\partial_{\lambda}F(\lambda,L)}{F(\lambda,L)}{\rm d}\lambda.
\end{equation}
Here, $C_R$  represents the contour $g_1\cup g_2$ (see Fig. \ref{fig11}), and
$$
g_1\triangleq\Big\{\lambda={\rm i}\beta ~\Big|~ \beta:R\to -R \Big\}, ~
g_2\triangleq\left\{\lambda=R{\rm e}^{{\rm i}\theta}~\Big|~ \theta:-\dfrac{\pi}{2}\to \dfrac{\pi}{2} \right\}.
$$
Using the assumption $\mathscr{A}\cap\Theta=\emptyset$ yields $F(\lambda,L)\neq 0$ on $g_1$. This indicates that the term on the  right side of \eqref{argument} is continuous with respect to the variable $L$. This further implies that ${\rm NU}(L)$ is continuous with respect to $L$. Since ${\rm NU}(L)$ is an integer and $\Theta$ is connected, we conclude that ${\rm NU}(L)$ is a constant in the whole region $\Theta$. This therefore completes the proof.
\end{proof}

\begin{remark}\label{robust}
The {\it key} point in proving Theorem \ref{theorem1} is to establish an absence of the roots at infinity. This theorem is referred to as the root continuity argument because it demonstrates that all the roots in $\mathbb{C}_+$ vary continuously with respect to $L$ (see \cite[Lemma 2.1]{b15}, \cite[Proposition 3.1]{b18}). From this, it follows that \textit{the value of ${\rm NU}(L)$ only changes if a root appears on or cross $\mathbb{C}_0$ for some $L$}. This theorem holds significant importance for our geometric approach, as it suggests that 
\textit{the stability property of system \eqref{2} only changes at the SCCs}. Additionally, it also implies that the stability property is robust against small variations of the parameters, so that slight perturbations to parameters do not result in instability.
\end{remark}

\begin{figure}
\begin{center}
\centering\includegraphics[width=0.45\textwidth]{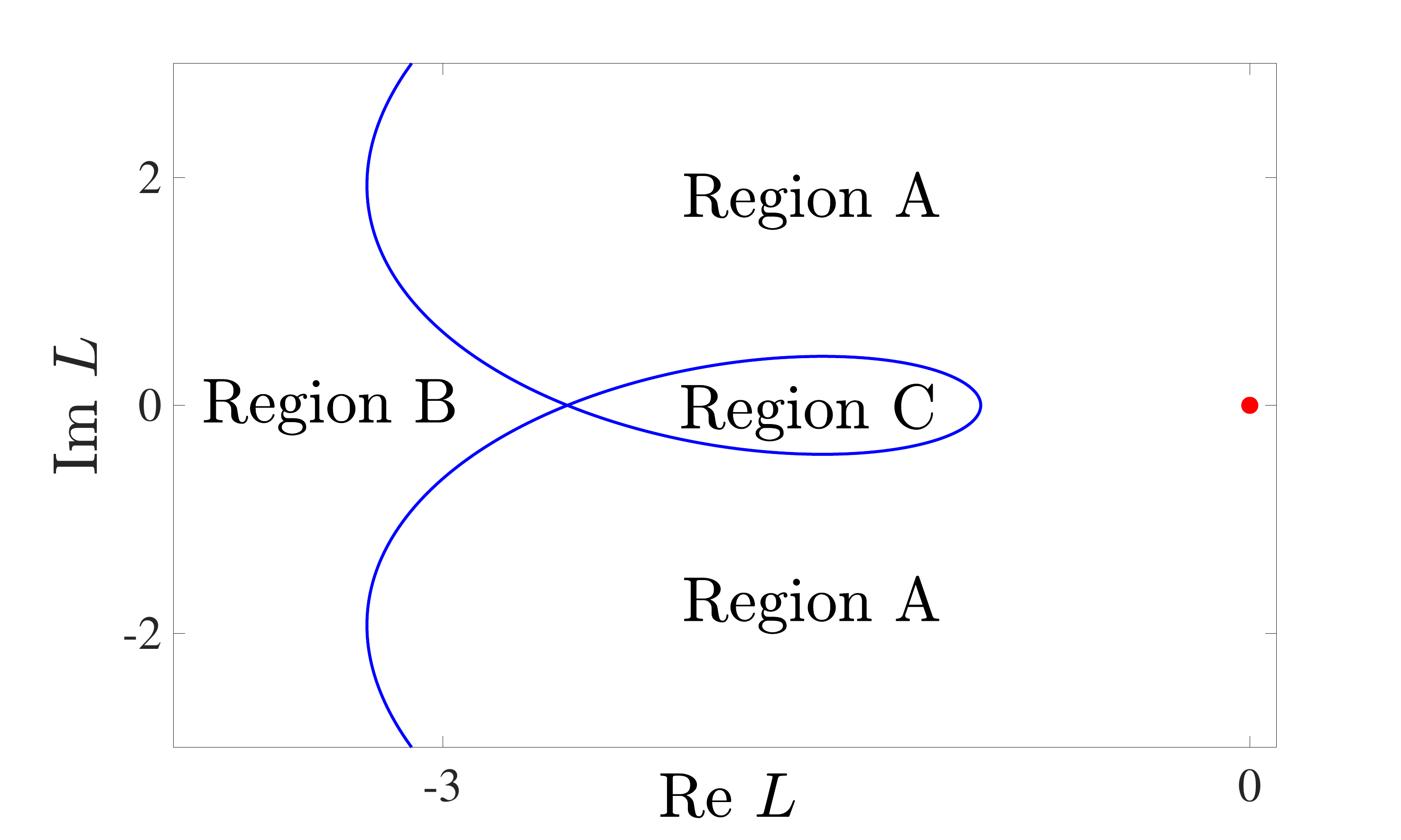}
\caption{The SCCs for system \eqref{1105} (the blue curves) separate the complex plane into several regions. The origin $L=0$, which is highlighted by a red dot, lies within Region $A$.  According to Theorem \ref{theorem1},  ${\rm NU}(0)=1$ implies ${\rm NU}({\rm Region}~A)=1$.} \label{fig1}
\end{center}
\end{figure}

\begin{example}
Consider a linear time-invariant system with a discrete time-delay as
\begin{equation}\label{1105}
\dot{z}=z+Lz(t-1/2),
\end{equation}
where $L$ is a complex-valued parameter.  The TCE for this system is $F(\lambda,L)\triangleq \lambda-1-L{\rm e}^{-\frac{\lambda}{2}}=0$.  As usual, we substitute $\lambda={\rm i}\beta$ into the equation.  Different from the traditional method, we focus the SCCs, parameterizing it as  $L=L(\beta)={\rm e}^{{\rm i}\beta/2}({\rm i}\beta-1)$.  As seen in Fig. \ref{fig1}, the SCCs separate the complex plane of $L$ into several regions.  According to Theorem \ref{theorem1}, NU sustains its value as constant within each of these regions. When $L=0$ (whose loci is highlighted by the red dot in Fig.~\ref{fig1}), the TCE becomes $\lambda-1=0$, from which we obtain ${\rm NU}(0)=1$.  Consequently, we have ${\rm NU}({\rm Region}~A)=1$, in which the connected Region $A$ indicated in Fig.~~\ref{fig1}.
\end{example}

\subsection{Geometric approach for establishing stability regions}\label{sec3.5}

In this subsection, we investigate the geometric property of the SCCs (see Theorem \ref{geometric} and Remark \ref{2106}), which comes from the asymptotic behavior of the critical imaginary  roots (see Lemma \ref{aba}). Upon this, we propose a geometric approach for establishing stability regions $\Omega$ for linear time-delay systems (see Example \ref{2107}).

First, we investigate the asymptotic behavior of the critical imaginary roots. Suppose a point $L^*$ to belong to the SCCs, given that $F(\lambda^*,L^*)=0$ and $\lambda^*\in\mathbb{C}_0$. A question arises ``What happens to the root of equation $F(\lambda,L)=0$, if we move $L$ from $L^*$ to its neighborhood?''
\begin{lemma}\label{aba}
Suppose that $F(\lambda^*,L^*)=0$ where $\lambda^*\in\mathbb{C}_0$. If $\partial_{\lambda} F(\lambda^*,L^*)\neq 0$,
then there exists an implicit function $\lambda(L)$ for $L$ in some neighborhood of $L^*$ such that 
$$
F(\lambda(L),L)=0, ~
\lambda(L^*)=\lambda^*, ~
\lambda'(L^*)=-\dfrac{\partial_{L}F(\lambda^*,L^*)}{\partial_{\lambda}F(\lambda^*,L^*)}.
$$
\end{lemma}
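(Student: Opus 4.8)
The plan is to invoke the analytic implicit function theorem, so the key point is to verify its hypotheses in the present setting. First I would observe that, by the definition of the characteristic function \eqref{character}, the map $(\lambda,L)\mapsto F(\lambda,L)$ is jointly analytic in a neighborhood of $(\lambda^*,L^*)$ in $\mathbb{C}\times\mathbb{C}$: the determinant is a polynomial in the entries of $\lambda\bm{I}_q-\bm{Q}(L)-\bm{B}(L)\hat h(\lambda)$, the matrices $\bm{Q}(L)$ and $\bm{B}(L)$ are analytic in $L$ by the standing hypothesis on system \eqref{2}, and $\hat h(\lambda)=\int_0^{+\infty}{\rm e}^{-\lambda\tau}h(\tau)\,{\rm d}\tau$ is analytic in $\lambda$ on $\overline{\mathbb{C}}_+$ (indeed on an open right half-plane strip around $\mathbb{C}_0$) because $h$ is a probability density/measure with unit mass, so the integral converges locally uniformly and can be differentiated under the integral sign; in the Dirac case $\hat h(\lambda)={\rm e}^{-\lambda\tau'}$ is entire. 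Hence $\partial_\lambda F$ and $\partial_L F$ exist and are themselves analytic near $(\lambda^*,L^*)$.

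Next, with the nondegeneracy hypothesis $\partial_\lambda F(\lambda^*,L^*)\neq 0$ in hand together with $F(\lambda^*,L^*)=0$, I would apply the holomorphic implicit function theorem (for a single equation in two complex variables): there exist open neighborhoods $U\ni L^*$ and $V\ni\lambda^*$ and a unique analytic function $\lambda:U\to V$ with $\lambda(L^*)=\lambda^*$ and $F(\lambda(L),L)=0$ for all $L\in U$. Differentiating the identity $F(\lambda(L),L)\equiv 0$ with respect to $L$ by the chain rule gives $\partial_\lambda F(\lambda(L),L)\,\lambda'(L)+\partial_L F(\lambda(L),L)=0$, and evaluating at $L=L^*$ (where the coefficient $\partial_\lambda F(\lambda^*,L^*)$ is nonzero and may be divided out) yields exactly
$$
\lambda'(L^*)=-\dfrac{\partial_{L}F(\lambda^*,L^*)}{\partial_{\lambda}F(\lambda^*,L^*)}.
$$

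The main obstacle, and the only step requiring genuine care, is the analyticity of $\hat h(\lambda)$ — and through it of $F$ — in the two qualitatively different cases comprising the family $\mathscr{F}$. For $h\in\mathscr{D}$ (a finite combination of Dirac deltas, e.g. $h=\delta(\cdot-\tau')$) this is immediate since $\hat h$ is a finite exponential sum, entire in $\lambda$. For $h\in\mathscr{M}$ one must check that $\int_0^{+\infty}{\rm e}^{-\lambda\tau}h(\tau)\,{\rm d}\tau$ defines a holomorphic function in a complex neighborhood of any point $\lambda^*\in\mathbb{C}_0$: for $\mathrm{Re}\,\lambda\ge -\varepsilon$ one does not in general have absolute convergence, so one should either restrict attention to the (typical) case where $h$ has compact support or decays exponentially — as in the uniform and Gamma examples in the Remark, where $\hat h$ extends to an entire or meromorphic function — or, more carefully, note that only the values of $\hat h$ on $\overline{\mathbb{C}}_+$ enter the problem and invoke Morera's theorem plus dominated convergence on $\mathrm{Re}\,\lambda\ge 0$ to get holomorphy on $\mathbb{C}_+$ together with continuity up to $\mathbb{C}_0$; since $\lambda^*\in\mathbb{C}_0$ sits on the boundary, the cleanest route is to assume (as the paper's examples do implicitly) that $\hat h$ admits a holomorphic extension across $\mathbb{C}_0$, which is what makes the derivative formula at $\lambda^*$ meaningful. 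Once this analyticity is secured, the rest is a routine application of the implicit function theorem and the chain rule.
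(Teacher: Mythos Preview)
Your approach is correct and matches the paper's own treatment: the paper simply states that ``This Lemma can be validated directly using the well-known Implicit Function Theorem'' without further detail. You have in fact supplied more than the paper does, by spelling out why $F$ is jointly analytic and by flagging the subtlety about analytic continuation of $\hat h$ across $\mathbb{C}_0$ for general $h\in\mathscr{M}$; the paper leaves all of this implicit.
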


This Lemma can be validated directly using the well-known Implicit Function Theorem. Next, we investigate the geometric property of the SCCs. 

\begin{theorem}\label{geometric}
Suppose that $L=L(\beta^*)$ belongs to a local SCC which be locally parameterized as $L=L(\beta)$. Denote by $\vec{n}\triangleq{\rm i}\cdot {L'(\beta^*)}$ the normal vector of the local SCC at $L^*$. Therefore, ${\rm NU}(L^*+\epsilon \vec{n})-{\rm NU}(L^*-\epsilon \vec{n})=-1$ for sufficiently small $\epsilon>0$.
\end{theorem}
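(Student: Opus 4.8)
\emph{Proof proposal.} The plan is to follow, as $L$ leaves the SCC through $L^*$, the unique branch of characteristic roots that passes through the imaginary axis at $L^*$, and to show that this root crosses $\mathbb{C}_0$ transversally — from $\mathbb{C}_+$ into $\mathbb{C}_-$ — exactly when $L$ moves in the direction $\vec{n}$. Write $\lambda^*={\rm i}\beta^*$ and abbreviate $F_\lambda\triangleq\partial_\lambda F(\lambda^*,L^*)$, $F_L\triangleq\partial_L F(\lambda^*,L^*)$. Throughout I would use the non-degeneracy conditions $F_\lambda\neq0$ and $F_L\neq0$, which are implicit in the hypothesis that $L^*$ lies on a local SCC parameterized by $L=L(\beta)$: the first is needed for Lemma \ref{aba}, while the second guarantees $L'(\beta^*)\neq0$ so that $\vec{n}$ is a genuine normal vector and the crossing curve is smooth at $L^*$.

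First I would carry out a localization step. By Lemma \ref{aba} there is an analytic branch $\lambda(L)$ on a ball $B\ni L^*$ with $\lambda(L^*)=\lambda^*$ and $\lambda'(L^*)=-F_L/F_\lambda$. I would then argue that, after shrinking $B$, this branch is the only root of $F(\cdot,L)=0$ whose position relative to $\mathbb{C}_0$ can vary over $B$. This rests on the uniform ``no roots at infinity'' bound obtained in the proof of Theorem \ref{theorem1}: for $L$ ranging over the bounded set $B$, every root in $\overline{\mathbb{C}}_+$ stays in a fixed semicircle $|\lambda|\le R$; since $\lambda^*$ is a simple root of $F(\cdot,L^*)=0$ and (for a single local SCC) the only one on $\mathbb{C}_0$, continuity of the roots yields $\delta>0$ such that every other root with $|\lambda|\le R$ has $|{\rm Re}\,\lambda|\ge\delta$ for all $L\in B$. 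Consequently, for $L\in B$ off the SCC, ${\rm NU}(L)$ equals a fixed constant plus the indicator $\mathbf{1}_{\{{\rm Re}\,\lambda(L)>0\}}$.

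Next I would establish the key identity $\lambda'(L^*)\,\vec{n}=-1$. Differentiating $F({\rm i}\beta,L(\beta))\equiv0$ at $\beta=\beta^*$ gives ${\rm i}F_\lambda+L'(\beta^*)F_L=0$, hence $L'(\beta^*)=-{\rm i}F_\lambda/F_L$ and $\vec{n}={\rm i}\,L'(\beta^*)=F_\lambda/F_L$; multiplying by $\lambda'(L^*)=-F_L/F_\lambda$ gives $\lambda'(L^*)\,\vec{n}=-1$, so ${\rm Re}\big(\lambda'(L^*)\,\vec{n}\big)=-1$. Then, using ${\rm Re}\,\lambda^*=0$ and the expansion $\lambda(L^*\pm\epsilon\vec{n})=\lambda^*\pm\epsilon\,\lambda'(L^*)\,\vec{n}+o(\epsilon)$, I obtain ${\rm Re}\,\lambda(L^*\pm\epsilon\vec{n})=\mp\epsilon+o(\epsilon)$; hence for all sufficiently small $\epsilon>0$, ${\rm Re}\,\lambda(L^*+\epsilon\vec{n})<0$ while ${\rm Re}\,\lambda(L^*-\epsilon\vec{n})>0$. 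Combining this with the localization step, ${\rm NU}(L^*+\epsilon\vec{n})-{\rm NU}(L^*-\epsilon\vec{n})=0-1=-1$, which is the claim.

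The main obstacle I anticipate is the localization step: rigorously excluding the possibility that some other characteristic root independently reaches $\mathbb{C}_0$ near $L^*$. This is precisely where Theorem \ref{theorem1}'s control on roots of large modulus is indispensable, together with the simplicity (hence isolation) of $\lambda^*$ on the imaginary axis; if $F(\cdot,L^*)=0$ happens to carry several roots on $\mathbb{C}_0$, one would run the argument on each local branch separately and the $\pm1$ contributions would add. The remaining computation — the identity $\lambda'(L^*)\,\vec{n}=-1$ and the first-order expansion — is a routine consequence of the Implicit Function Theorem underlying Lemma \ref{aba}.
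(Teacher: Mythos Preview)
Your proposal is correct and follows essentially the same route as the paper: both proofs differentiate $F({\rm i}\beta,L(\beta))\equiv 0$ to express $\vec{n}=\partial_\lambda F/\partial_L F$, combine this with $\lambda'(L^*)=-\partial_L F/\partial_\lambda F$ from Lemma~\ref{aba} to obtain $\lambda'(L^*)\,\vec{n}=-1$, and then read off ${\rm Re}\,\lambda(L^*\pm\epsilon\vec{n})=\mp\epsilon+o(\epsilon)$. Your explicit localization step (arguing via Theorem~\ref{theorem1} that no other root can cross $\mathbb{C}_0$ near $L^*$) is a welcome piece of rigor that the paper's proof leaves implicit.
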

\begin{figure}
\begin{center}
\centering\includegraphics[width=0.45\textwidth]{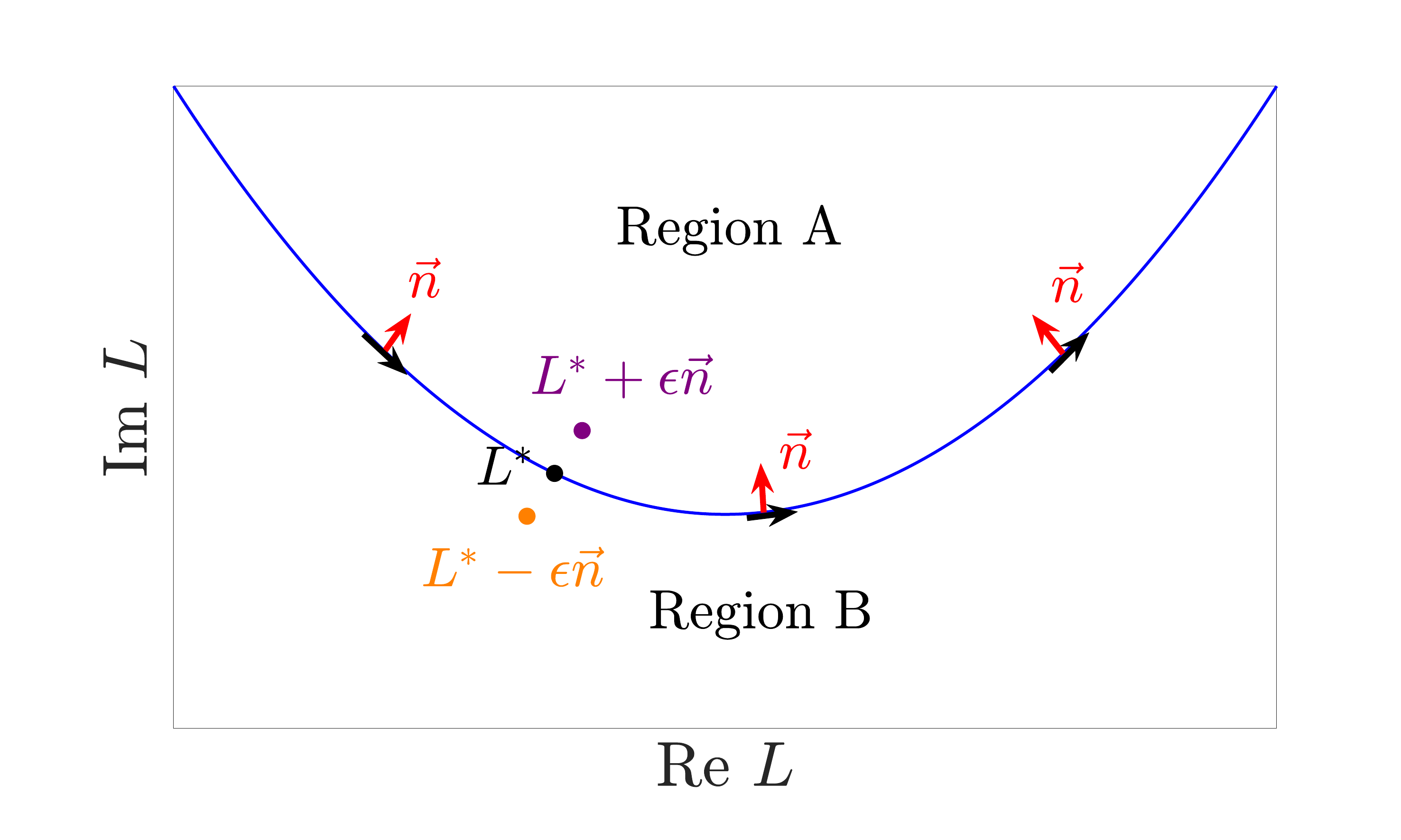}
\caption{The blue curve represents the local SCC $L(\beta)$, the black arrow represents the increasing direction of $\beta$, and $\vec{n}$ represents the normal vector defined in Theorem \ref{geometric}. The direction of $\vec{n}$ is obtained by a 90-degree-counterclockwise rotation of the tangent vector $L'(\beta)$. As proved in Theorem \ref{geometric}, when $L$ moves from $L^*-\epsilon\vec{n}$ (the brown dot) to $L^*+\epsilon\vec{n}$ (the purple dot), one root of equation $F(\lambda,L)=0$ undergoes a continuous transition from $\mathbb{C}_+$ to $\mathbb{C}_-$, crossing through $\mathbb{C}_0$ at $\lambda^*$ as $L=L^*$ (the black dot). Therefore, ${\rm NU}({\rm Region}~A)-{\rm NU}({\rm Region}~B)=-1$.} \label{fig3}
\end{center}
\end{figure}
\begin{proof}
For simplicity of denotations, we use $\partial_{\lambda}F$ and $\partial_L F$ instead of $\partial_{\lambda}F(\lambda^*,L^*)$ and $\partial_{L}F(\lambda^*,L^*)$, respectively.  According to Lemma \ref{aba}, we have
\begin{equation}
\label{yinhanshu0}
\lambda(L)-\lambda(L^*)=\lambda'(L^*)(L-L^*)+o(L-L^*),
\end{equation}
where
$\lambda'(L^*)=-{\partial_{L}F}\big/{\partial_{\lambda}F}$. 
By  substituting $L=L^*\pm\epsilon \vec{n}$ into Eq.~\eqref{yinhanshu0}, we deduce that
\begin{equation}\label{yinhanshu}
\lambda(L^*\pm\epsilon\vec{n})-{\rm i}\beta^*=\mp\epsilon\dfrac{\partial_{L}F}{\partial_{\lambda}F}\vec{n}+o(\epsilon).
\end{equation}
It follows from $F({\rm i}\beta, L(\beta))=0$ that $L'(\beta^*)=-{{\rm i}\cdot\partial_{\lambda}F}\big/{\partial_{L} F}$. Then, we derive  $\vec{n}={\rm i}{L'(\beta^*)}={\partial_{\lambda}F}\big/{\partial_L F}$. Substituting this result into Eq.~\eqref{yinhanshu} gives:
$$
\lambda(L^*\pm\epsilon\vec{n})={\rm i}\beta^*\mp{\epsilon}+o(\epsilon),
$$
which indicates that ${\rm Sgn}~[{\rm Re}\lambda(L^*\pm\epsilon\vec{n})]=\mp 1.$
This further implies that, \textit{as $L$ moves from $L^*-\epsilon \vec{n}$ to $L^*+\epsilon \vec{n}$, one of the roots of equation $F(\lambda,L)=0$, denoted by $\lambda(L)$, undergoes a continuous transition from $\mathbb{C}_+$ to $\mathbb{C}_-$, crossing through $\mathbb{C}_0$ at $\lambda^*$ when $L=L^*$} (see Fig. \ref{fig3}). Consequently, this implies that ${\rm NU}(L^*+\epsilon \vec{n})-{\rm NU}(L^*-\epsilon \vec{n})=-1.$
\end{proof}
\begin{remark}\label{2106}
The normal vector defined in Theorem \ref{geometric} can be expressed as $\vec{n}={\rm e}^{{\rm i}\frac{\pi}{2}}L'(\beta^*)$, which indicates that it is obtained by a 90-degree-counterclockwise rotation of the tangent direction of the SCCs with respect to the increase of $\beta$.  Thus, the geometric interpretation of Theorem \ref{geometric} is demonstrated as follows: \textit{Given the local representation of the SCC as shown in Fig. \ref{fig3}, where the  black arrow indicates the increasing direction of $\beta$, we  express that ${\rm NU}({\rm Region}~A)-{\rm NU}({\rm Region}~B)=-1$.} This characteristic offers a geometric approach for defining the stability region for a specific group of  complex-valued and linear time-delay systems. In the following, we provide an illustrative example.
\end{remark}

\begin{figure}
\begin{center}
\centering\includegraphics[width=0.45\textwidth]{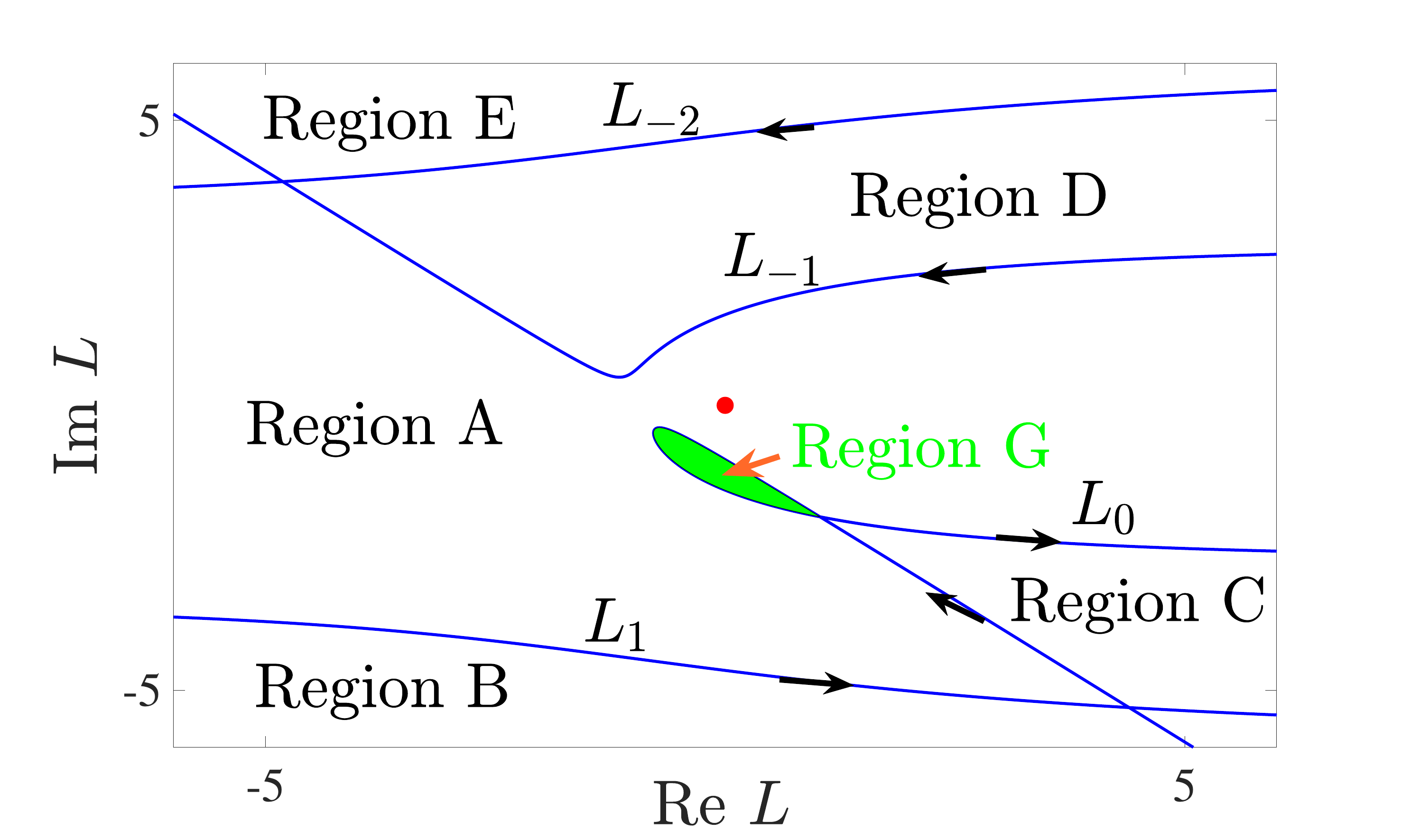}
\caption{The SCCs $L=L_{k}(\beta)$  for system \eqref{example2} separate the complex plane into several regions, while the black arrows represent the increasing directions of $\beta$. Here, the red dot represents the origin and from this, we deduce that  ${\rm NU}({\rm Region}~A)=1$. The value of ${\rm NU}$ for each region is easily obtained using Theorem \ref{geometric}, so that the stability region $\Omega$ for the considered system is ${\rm Region}~G$ (see the green shaded area).} \label{fig5}
\end{center}
\end{figure}

\begin{example}\label{2107}
Consider a complex-valued and linear time-delay system as
\begin{equation}\label{example2}
    \dot{z}=0.1(1+{\rm i})z+L(z(t-1)-z).
\end{equation}
The TCE for system \eqref{example2} becomes
$\lambda=0.1(1+{\rm i})+L({\rm e}^{-\lambda}-1)$.
The SCCs, obtained by taking $\lambda={\rm i}\beta$ into the equation, are
$$
L_k(\beta)=\dfrac{{\rm i}\beta-0.1(1+{\rm i})}{{\rm e}^{-{\rm i}\beta}-1}, ~\beta\in(2k\pi,(2k+2)\pi), ~k\in\mathbb{Z}.
$$
As shown in Fig.~\ref{fig5}, the SCCs separate complex plane into a few number of regions.  Clearly, ${\rm NU}(0)=1$. According to Theorem \ref{theorem1}, we have ${\rm NU}({\rm Region}~A)=1$. Using Theorem \ref{geometric}, we obtain that ${\rm NU}({\rm Region}~B)={\rm NU}({\rm Region}~D)={\rm NU}({\rm Region}~C)=2$, ${\rm NU}({\rm Region}~E)=3$, and ${\rm NU}({\rm Region}~G)=0$. The stability region $\Omega$ for the complex-valued $L$ is Region $G$, which is highlighted by green in  Fig.~\ref{fig5}.\hfill{$\square$}\end{example}

\subsection{Parametric representation of the polar coordinates for SCCs}\label{sec5}
In this subsection, we investigate the parametric representation of the polar coordinates for the SCCs, which is beneficial for identifying the critical parameter values at which the shape of the SCCs undergoes. 

\begin{figure}
\begin{center}
\centering\includegraphics[width=0.45\textwidth]{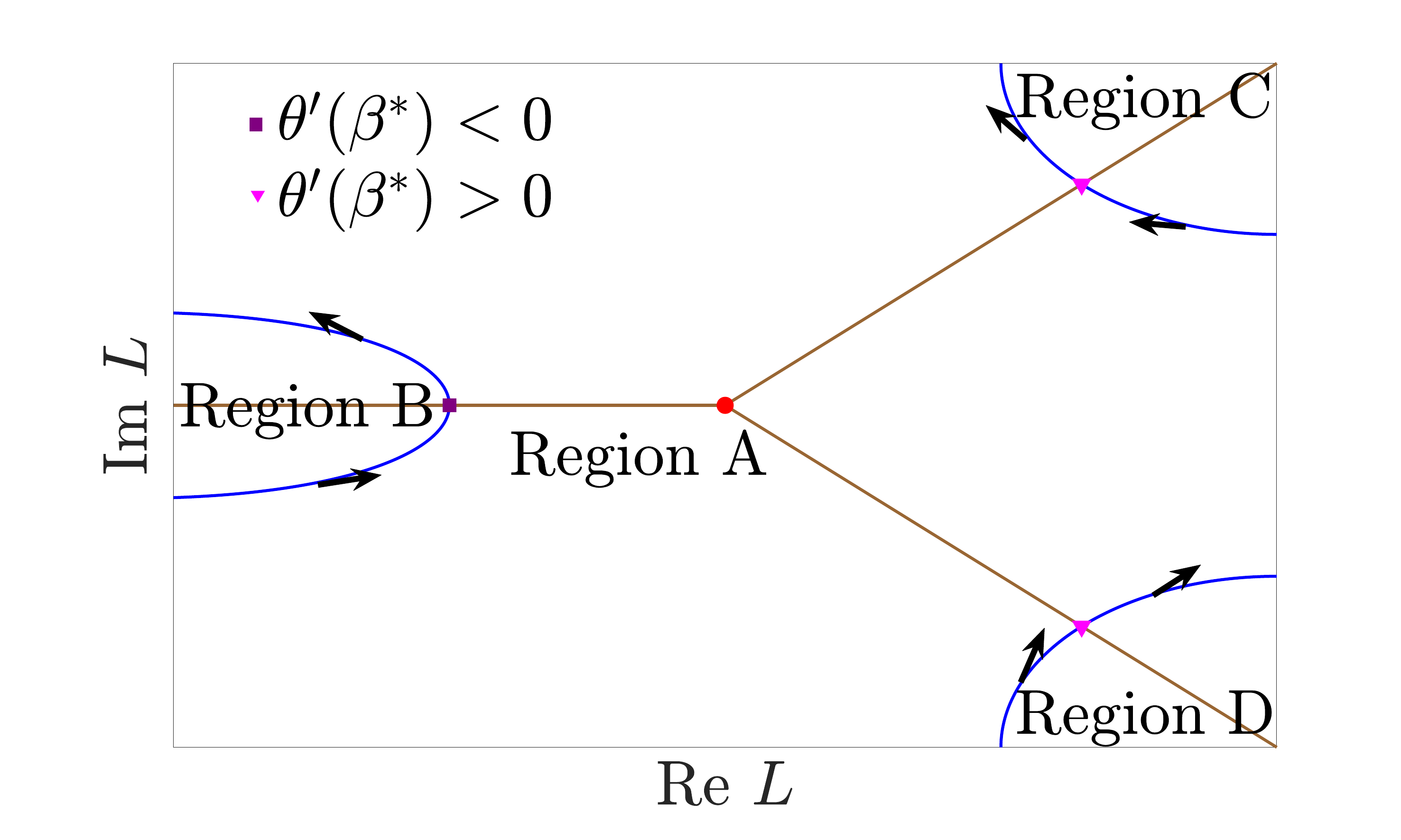}
\caption{The directions of the SCCs (the blue curves) are determined by different signs of $\theta'(\beta)$.  Here,  the black arrows represent the increasing directions of $\beta$. Multiple brown lines, originating from the origin (the red dot), intersect the SCCs at various points. The purple square dot corresponds to $\theta'(\beta^*)<0$, while the magenta triangle dots correspond to $\theta'(\beta^*)>0$. From Theorem \ref{theorem3}, ${\rm NU}({\rm Region}~B)-{\rm NU}({\rm Region}~A)=-1$, 
${\rm NU}({\rm Region}~C)-{\rm NU}({\rm Region}~A)=1$, and 
${\rm NU}({\rm Region}~D)-{\rm NU}({\rm Region}~A)=1$.} \label{fig25}
\end{center}
\end{figure}

Suppose that the SCCs are locally parameterized as $L(\beta)=r(\beta){\rm e}^{{\rm i}\theta(\beta)}$. Let us consider a ray $\gamma_{\theta}(t)\triangleq t{\rm e}^{{\rm i}\theta}$ with $t\in[0,+\infty)$ and $\theta\in [0,2\pi)$, starting from the origin and intersecting the SCCs at certain points.  We obtain the following theorem (see Fig. \ref{fig25}). 
\begin{theorem}\label{theorem3}
 Suppose that the ray $\gamma_{\theta^*}$ intersects the SCCs at $L^*=L(\beta^*)=\gamma_{\theta^*}(t^*)$, and that it is locally parameterized as $L=L(\beta)=r(\beta){\rm e}^{{\rm i}\theta(\beta)}$ with $\beta$ in the vicinity of $\beta^*$. Then, ${\rm NU}(\gamma_{\theta^*}(t^*+\epsilon))-{\rm NU}(\gamma_{\theta^*}(t^*-\epsilon))={\rm Sgn}(\theta'(\beta^*))$ for $\theta'(\beta^*)\neq 0$.
\end{theorem}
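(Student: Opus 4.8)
The plan is to reduce Theorem~\ref{theorem3} to Theorem~\ref{geometric} by comparing the ray direction $\gamma_{\theta^*}$ against the normal vector $\vec n = {\rm i}\,L'(\beta^*)$ of the SCC at $L^*$. The key observation is that moving along the ray from $t^*-\epsilon$ to $t^*+\epsilon$ is, to first order, a displacement in the direction $L^*/|L^*| = {\rm e}^{{\rm i}\theta^*}$, and crossing the SCC transversally in that direction is equivalent (by Theorem~\ref{geometric}) to a change of $\pm 1$ in ${\rm NU}$, the sign being $+1$ if the ray direction has positive inner product with $\vec n$ and $-1$ otherwise. So the whole theorem comes down to computing the sign of $\langle {\rm e}^{{\rm i}\theta^*},\vec n\rangle$ and showing it equals ${\rm Sgn}(\theta'(\beta^*))$.

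First I would write $L(\beta) = r(\beta){\rm e}^{{\rm i}\theta(\beta)}$ and differentiate: $L'(\beta) = \bigl(r'(\beta) + {\rm i}\,r(\beta)\theta'(\beta)\bigr){\rm e}^{{\rm i}\theta(\beta)}$. Hence at $\beta^*$ the normal vector is
\[
\vec n = {\rm i}\,L'(\beta^*) = \bigl({\rm i}\,r'(\beta^*) - r(\beta^*)\theta'(\beta^*)\bigr){\rm e}^{{\rm i}\theta^*}.
\]
The component of $\vec n$ along the ray direction ${\rm e}^{{\rm i}\theta^*}$ is ${\rm Re}\bigl(\overline{{\rm e}^{{\rm i}\theta^*}}\,\vec n\bigr) = -\,r(\beta^*)\theta'(\beta^*)$. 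Since $r(\beta^*) = t^* > 0$ (the intersection point is not the origin, as the origin never lies on an SCC when $\bm Q(0)$ is, say, a neutral configuration — more carefully, $r(\beta^*)=|L^*|>0$ because $L^*\in\mathscr A$ and $0\notin\mathscr A$ in the relevant cases, or simply because $t^*>0$), the sign of this component is $-{\rm Sgn}(\theta'(\beta^*))$. Meanwhile $\theta'(\beta^*)\neq 0$ guarantees the ray meets the SCC transversally, so the local picture near $L^*$ really is the one in Fig.~\ref{fig3}.

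Next I would translate this into the ${\rm NU}$ bookkeeping. Write $\gamma_{\theta^*}(t^*\pm\epsilon) = L^* \pm \epsilon\,{\rm e}^{{\rm i}\theta^*}$. Decompose ${\rm e}^{{\rm i}\theta^*}$ into its component along $\vec n/|\vec n|$ and a component tangent to the SCC; only the normal component affects which side of the curve the point lies on, and by the computation above the normal component of $+{\rm e}^{{\rm i}\theta^*}$ points in the direction $-{\rm Sgn}(\theta'(\beta^*))\cdot\vec n/|\vec n|$. By Theorem~\ref{geometric}, ${\rm NU}(L^*+\delta\vec n) - {\rm NU}(L^*-\delta\vec n) = -1$ for small $\delta>0$; equivalently ${\rm NU}$ decreases by $1$ when crossing the SCC in the $+\vec n$ direction and increases by $1$ in the $-\vec n$ direction. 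Combining: if $\theta'(\beta^*)>0$ the ray crosses in the $-\vec n$ direction as $t$ increases, so ${\rm NU}$ goes up by $1$; if $\theta'(\beta^*)<0$ it crosses in the $+\vec n$ direction and ${\rm NU}$ drops by $1$. In both cases ${\rm NU}(\gamma_{\theta^*}(t^*+\epsilon)) - {\rm NU}(\gamma_{\theta^*}(t^*-\epsilon)) = {\rm Sgn}(\theta'(\beta^*))$, as claimed.

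The main obstacle I anticipate is a careful justification that for $\epsilon$ small enough the points $\gamma_{\theta^*}(t^*\pm\epsilon)$ lie in the two regions separated by the \emph{single} local branch of the SCC through $L^*$ and in no other branch — i.e., that the SCC is locally a simple arc near $L^*$ with a well-defined tangent, and that the ray is not tangent to it (handled by $\theta'(\beta^*)\neq 0$) nor passing through a self-intersection or a point where $\partial_\lambda F = 0$. I would invoke Lemma~\ref{aba} (which needs $\partial_\lambda F(\lambda^*,L^*)\neq 0$ to get the smooth implicit branch $\lambda(L)$ and hence a well-defined, nonzero $L'(\beta^*)$) and restrict $\epsilon$ so that the segment $\{\gamma_{\theta^*}(t^*+s) : |s|\le\epsilon\}$ meets $\mathscr A$ only at $L^*$; then Theorem~\ref{theorem1} gives that ${\rm NU}$ is constant on each half-segment, and Theorem~\ref{geometric} supplies the jump. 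The rest is the routine sign computation above.
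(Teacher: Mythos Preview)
Your proposal is correct, and the core sign computation (differentiating $L(\beta)=r(\beta){\rm e}^{{\rm i}\theta(\beta)}$ to get $L'(\beta^*)={\rm e}^{{\rm i}\theta^*}[r'(\beta^*)+{\rm i}r(\beta^*)\theta'(\beta^*)]$) is exactly what the paper does. The organizational difference is that you invoke Theorem~\ref{geometric} as a black box and reduce the question to the sign of the inner product $\langle {\rm e}^{{\rm i}\theta^*},\vec n\rangle$, whereas the paper bypasses Theorem~\ref{geometric} and goes back to Lemma~\ref{aba} directly: it expands $\lambda(\gamma_{\theta^*}(t^*\pm\epsilon))={\rm i}\beta^*\pm\epsilon\,\lambda'(L^*){\rm e}^{{\rm i}\theta^*}+o(\epsilon)$, substitutes $\lambda'(L^*)={\rm i}/L'(\beta^*)={\rm i}/\bigl({\rm e}^{{\rm i}\theta^*}[r'+{\rm i}r\theta']\bigr)$, and reads off ${\rm Sgn}\,{\rm Re}\,\lambda(\gamma_{\theta^*}(t^*\pm\epsilon))=\pm{\rm Sgn}(\theta'(\beta^*))$. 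Your route is a bit more modular since it reuses the previous theorem rather than repeating the implicit-function step; the paper's direct route has the advantage that the ``obstacle'' you flag (local simple-arc structure, single-branch crossing) never needs to be articulated, because tracking the one analytic root $\lambda(L)$ along the ray segment already encodes exactly which root crosses $\mathbb C_0$ and in which direction.
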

\begin{proof} By substituting  $L=\gamma_{\theta^*}(t^*\pm\epsilon)$ into Eq.~\eqref{yinhanshu0},  we have
$$
\lambda(\gamma_{\theta^*}(t\pm\epsilon))-{\rm i}\beta^*=\pm\epsilon\lambda'(L^*){\rm e}^{{\rm i}\theta^*}+o(\epsilon).
$$
Moreover, we have
$$
\lambda'(L^*)=-\dfrac{\partial_{L}F(\lambda^*,L^* )}{\partial_{\lambda}F(\lambda^*,L^*)}=\dfrac{{\rm i}}{L'(\beta^*)}.
$$
The second equality is derived from $F({\rm i}\beta, L(\beta))=0$. Differentiating with respect to $\beta$ at $\beta=\beta^*$ for $L(\beta)=r(\beta){\rm e}^{{\rm i}\theta(\beta)}$, we obtain that
$$
L'(\beta^*)={\rm e}^{{\rm i}\theta^*}[r'(\beta^*)+{\rm i}r(\beta^*)\theta'(\beta^*)].
$$
Thus, we have
$$\begin{aligned}
\lambda'(L^*){\rm e}^{{\rm i}\theta^*}&=\dfrac{{\rm i}}{L'(\beta^*)}{\rm e}^{{\rm i}\theta^*}=\dfrac{{\rm i}}{{\rm e}^{{\rm i}\theta^*}[r'(\beta^*)+{\rm i}r(\beta^*)\theta'(\beta^*)]}{\rm e}^{{\rm i}\theta^*}\\
&=\dfrac{{\rm i}}{r'(\beta^*)+{\rm i}r(\beta^*)\theta'(\beta^*)}=\dfrac{r(\beta^*)\theta'(\beta^*)+{\rm i} r'(\beta^*)   }{  r'(\beta^*)^2+  r(\beta^*)^2\theta'(\beta^*)^2},\\
\end{aligned}$$
which further implies that ${\rm Sgn}~[\lambda(\gamma_{\theta^*}(t^*\pm\epsilon))]=\pm{\rm Sgn}~[\theta'(\beta^*)]$. This indicates that, \textit{as $L$ moves from $\gamma_{\theta^*}(t^*-\epsilon)$ to $\gamma_{\theta^*}(t^*+\epsilon)$, one of the roots of the TCE $F(\lambda,L)=0$ undergoes a continuous transition from $\mathbb{C}_-$ to $\mathbb{C}_+$ when $\theta'(\beta^*)>0$  (or from $\mathbb{C}_+$ to $\mathbb{C}_-$ when $\theta'(\beta^*)<0$), crossing through $\mathbb{C}_0$ at ${\rm i}\beta^*$ when $L=L^*$} (see Fig. \ref{fig25}). Consequently, this implies that ${\rm NU}(\gamma_{\theta^*}(t^*+\epsilon))-{\rm NU}(\gamma_{\theta^*}(t^*-\epsilon))={\rm Sgn}(\theta'(\beta^*))$.
\end{proof}


\section{Applications to scalar DDEs}\label{sec5.3}

In this section, we employ the geometric approach to determine the stability region $\Omega$ of the scalar system $\dot{z}=az+L\int_0^{+\infty}h(\tau)z(t-\tau){\rm d}\tau$ for different scenarios of $h(\tau)$. These analyses have practical implications for real-world systems, as detailed in Examples~\ref{carfollowing3}-\ref{qutongbu}. First, we consider the scenario in which $h(\tau)$ is a Dirac delta function.

\begin{figure}
\begin{center}
\includegraphics[width=0.45\textwidth]{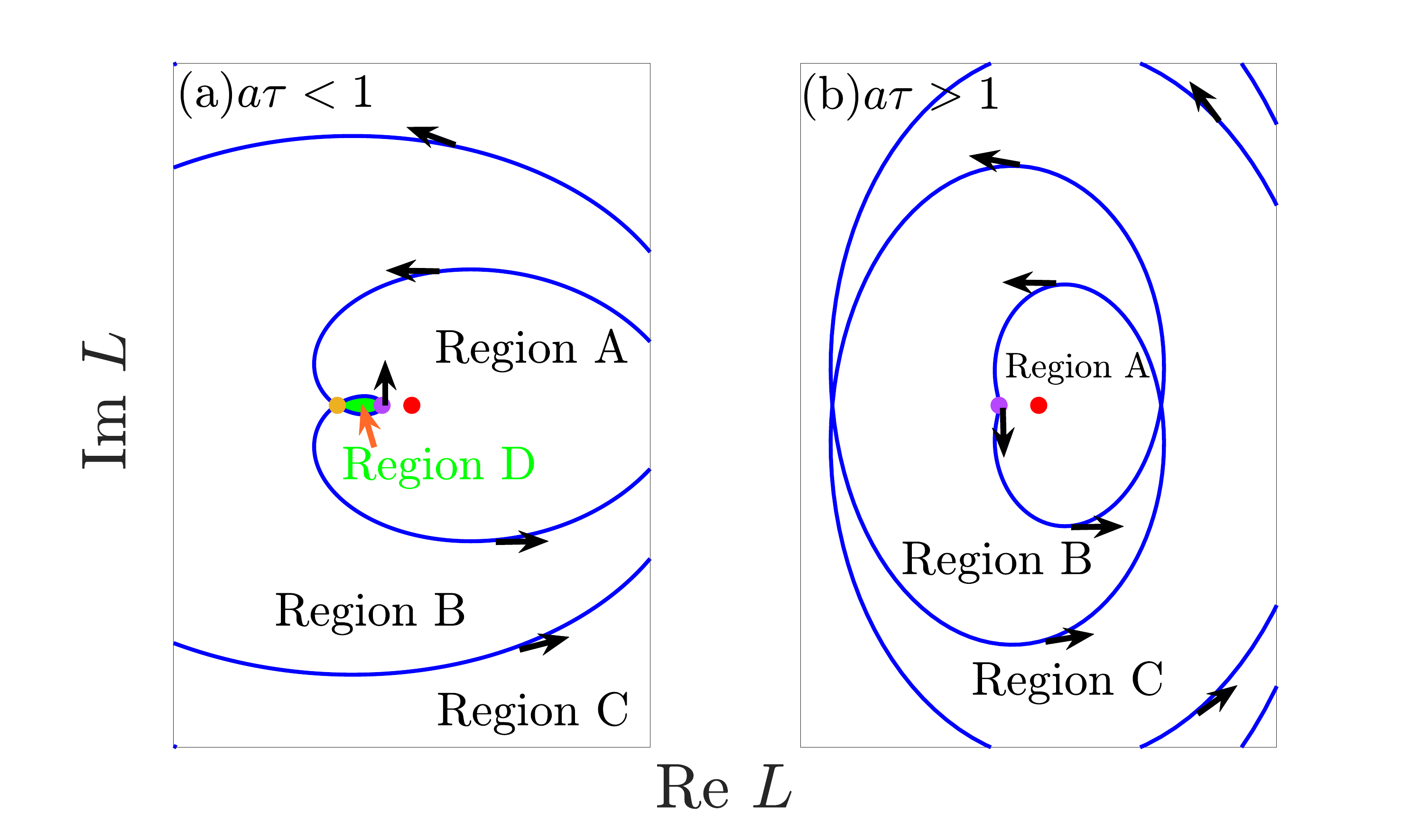}\label{fig13}
\subfigure{\label{fig13a}} \subfigure{\label{fig13b}}
\caption{The SCCs for system \eqref{example3} have different directions at $\beta=d$ (purple dots) for  $a\tau<1$ (a) and $a\tau>1$  (b). The black arrow indicates the increasing direction of the SCCs with respect to $\beta$. Here, the red dots represent the origin, which implies ${\rm NU}({\rm Region}~A)=1$. According to Theorem \ref{geometric}, 
${\rm NU}({\rm Region}~B)=2$, 
${\rm NU}({\rm Region}~C)=3$, and
${\rm NU}({\rm Region}~D)=0$. Thus, the stability regions $\Omega_{a,d,\tau}$ are the region colored by green in (a). 
 The brown dot in (a) represents the inflection point $L(d\pm\beta^*)$, where $\beta^*>0$ denotes the smallest positive root for which $\beta^*\tau-\arctan({\beta^*}/{a})=0$. The existence of the stability region depends on the direction of the SCCs at  $\beta=d$, which is determined by the sign of ${\theta}'(d)$.} \label{fig13}
\end{center}
\end{figure}

\begin{figure}
\begin{center}
\centering\includegraphics[width=0.45\textwidth]{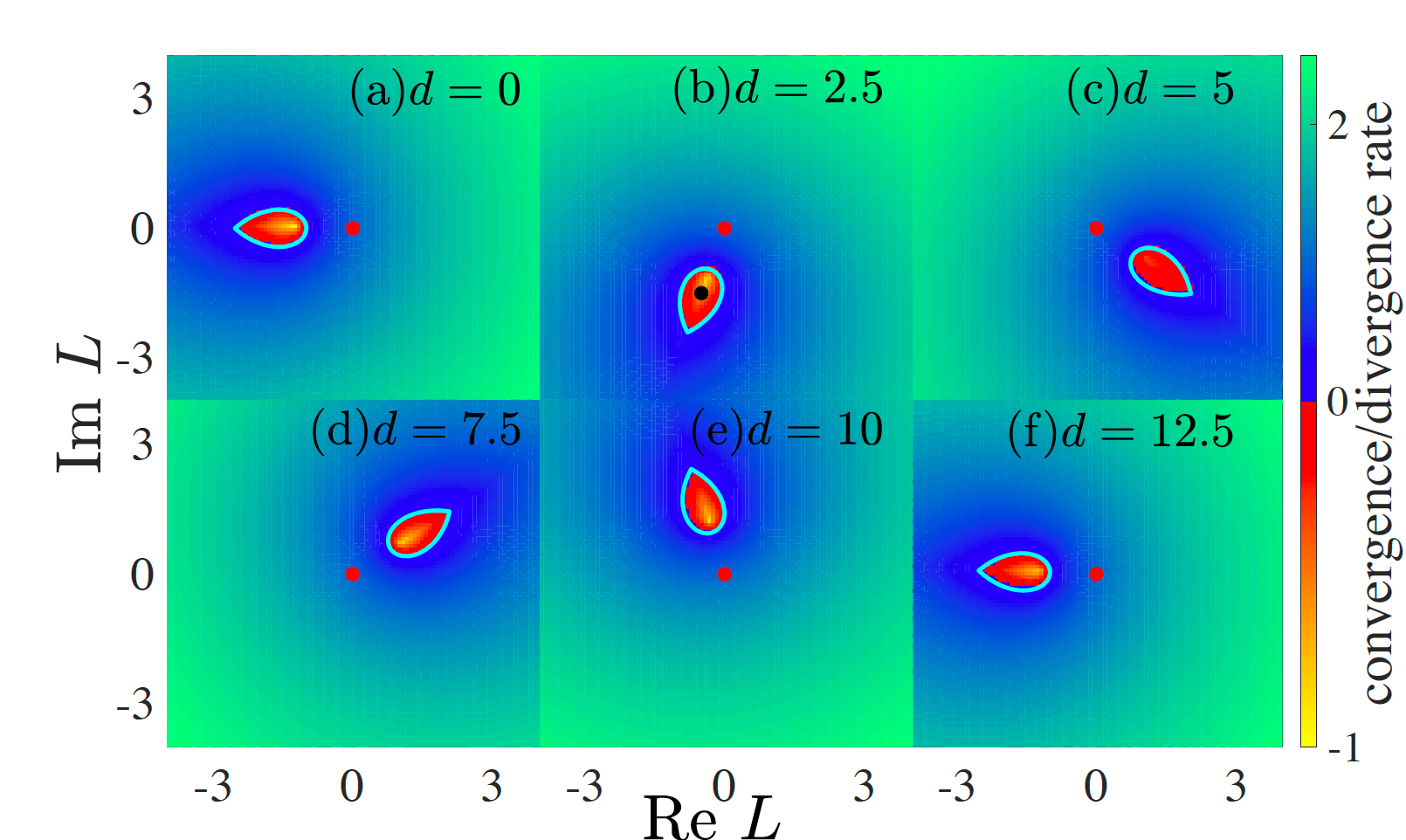}
\subfigure{\label{fig12a}} \subfigure{\label{fig12b}}
\caption{The stability region $\Omega_{a,d,\tau}$ for system \eqref{example3} with $a\tau<1$ appears rotationally around the origin (highlighted by the red dots) in the complex plane of $L$ for different $d$, including $d=0$ (a), $d=2.5$ (b), $d=5$ (c), $d=7.5$ (d), $d=10$ (e), and $ d=12.5$ (f). Here, the contours of the leaf-shaped stability regions are depicted by the solid cyan curves according to \eqref{stabilityregion} with $a=1$ and $\tau=0.5$. The rotation period is $2\pi/\tau\approx 12.56$.  The colors represent the exponential  convergence or divergence rates of the trajectories numerically generated by system \eqref{example3}, which are calculated by $\limsup_{t\to+\infty}\log|z_t|/t$. The black dot in (b) corresponds to the choice of the parameters for $C$ and $S$ in Fig. \ref{fig10b}.} \label{fig12}
\end{center}
\end{figure}

\begin{example}\label{example5}
Consider a system with a discrete delay as
\begin{equation}\label{example3}
\dot{z}=(a+{\rm i}d)z+Lz(t-\tau),
\end{equation}
where $a>0$, $d\in\mathbb{R}$, and $\tau>0$ is the time delay.  For this particular scenario, the stability regions, denoted by $\Omega_{a,d,\tau}$, have been completely investigated in \cite{b10,b11}.  However, to test the efficacy of the developed approach, we still investigate this problem.  Correspondingly, the TCE is given by
$$F(\lambda,\tau)\triangleq\lambda-a-{\rm i}d-L{\rm e}^{-\lambda\tau}=0.$$ The SCCs, obtained by taking $\lambda={\rm i}\beta$ into the equation, are
$
L=L(\beta)=-{\rm e}^{{\rm i}\beta\tau}[a-{\rm i}(\beta-d)]=r(\beta){\rm e}^{{\rm i}\theta(\beta)},
$
where
$$
r(\beta)=\sqrt{a^2+(\beta-d)^2}, ~~\theta(\beta)=\pi+\beta\tau-\arctan\dfrac{(\beta-d)}{a}.
$$
It follows that
$\theta'(\beta)=\tau-{a}/{[a^2+(\beta-d)^2]}$,
where $\theta'(\beta)$ achieves its minimum $\tau-{1}/{a}$ at $\beta=d$.  As seen in Fig. \ref{fig13}, the SCCs have different directions at $\beta=d$ for $a\tau<1$ and $a\tau>1$. 

When $a\tau<1$, as seen in Fig. \ref{fig13a},  the SCCs undergo self-intersection at $\beta=d\pm\beta^*$, where $\beta^*>0$ denotes the smallest positive root such that $\beta^*\tau-\arctan({\beta^*}/{a})=0$. This segment of the curve, $L=L(\beta),\beta\in[d-\beta^*,d+\beta^*]$, forms a stability region (depicted by a green shaded region in Fig. \ref{fig13a}). Here, $L(d-\beta^*)=L(d+\beta^*)$ represents the  inflection point of the SCCs (highlighted by the brown dot in Fig. \ref{fig13a}). By performing a parameter transformation $\hat{\beta}=\beta-d$, the contours of the stability regions are expressed as
\begin{equation}\label{stabilityregion}
L=L_d(\hat{\beta})=-{\rm e}^{{\rm i}d\tau}{\rm e}^{{\rm i}\hat{\beta}\tau}(a-{\rm i}\hat{\beta}), ~~\hat{\beta}\in[-\beta^*,\beta^*].
\end{equation}
From \eqref{stabilityregion}, it is inferred that, when the parameters $\tau$ and $a$ are fixed, the stability regions $\Omega_{a,d,\tau}$ appear rotationally around the origin as $d$ increases, with a rotation period of ${2\pi}/{d}$.  Additionally, Figure~\ref{fig12} shows these counterclockwise rotating and leaf-shaped regions, along with the consistent numerical results obtained from system \eqref{example3}.

When $a\tau>1$, as seen in Fig. \ref{fig13b}, there is no stability region in the whole complex plane. \hfill{$\square$}
\end{example}

Next, we consider the scenario in which $h(\tau)$ follows a Gamma distribution.

\begin{example}\label{examplelizi2}
Consider a system with distributed delay as
\begin{equation}\label{example4}
   \dot{z}=az+L\int_0^{+\infty}z(t-\tau)h^n_T(\tau){\rm d}\tau,
\end{equation}
where  $a>0$. 
We suppose that the distribution of the delay obeys Gamma's distribution 
\begin{equation}\label{gammadistribution}
h^n_T(\tau)\triangleq\dfrac{n^n}{(n-1)! T^n}\tau^{n-1}{\rm e}^{-\frac{\tau n}{T}},
\end{equation}
where parameter $n\in N^*$ and $T>0$ represents the mean value of the distribution. 
In the following, we investigate the stability region $\Omega_T^n$ for different values of $n$ and $T$.  Correspondingly, the TCE for system \eqref{example4} becomes
$\lambda=a+L \int_0^{+\infty} {\rm e}^{-\lambda\tau}h^n_T(\tau){\rm d}\tau$, where
$$
\int_0^{+\infty} {\rm e}^{-\lambda\tau}h^n_T(\tau){\rm d}\tau=\left(1+\dfrac{\lambda T}{n}\right)^{-n}.
$$
The SCCs, obtained by taking $\lambda={\rm i}\beta$ into the equation, are
\begin{equation}\label{stabilityregion2}
L=L^n_T(\beta)=-(a-{\rm i}\beta)\left(1+\dfrac{{\rm i}\beta T}{n}\right)^n.
\end{equation}
Thus, the SCCs can be written as  $L^n_T(\beta)=r^n_T(\beta){\rm e}^{{\rm i}\theta^n_T(\beta)}$, where
 $$
 r^n_T(\beta)=(a^2+\beta^2)^{\frac{1}{2}}\left(1+\dfrac{\beta^2T^2}{n^2}\right)^{\frac{n}{2}}
 $$
 and
 $$
 \theta^n_T(\beta)=\pi+n\arctan\left(\dfrac{\beta T}{n}\right)-\arctan\left(\dfrac{\beta}{a}\right).
 $$
 Consequently, we obtain ${\theta^n_T}'(0)=T-{1}/{a}$.

Case I: $n=1$. As shown in Figs.~\ref{fig7a}-\ref{fig7b}, the
SCCs separate the complex plane into two regions. They have different directions dependent on different groups of parameters. The critical value is determined by ${\theta^n_T}'(0)=0$. Clearly, ${\rm NU}(0)=1$, and thus ${\rm NU}({\rm Region}~A)=1$. According to Theorem \ref{theorem3}, ${\rm NU}({\rm Region}~B)-{\rm NU}({\rm Region}~A)={\rm Sgn}[{\theta^n_T}'(0)]$. As a result, for $aT<1$, ${\rm NU}({\rm Region}~B)=0$, which implies that the stability set $\Omega_T^n$ for the complex-valued $L$ is Region $B$, the green shaded region shown in Fig. \ref{fig7a}.  In addition, Figure~\ref{fig14} shows the unbounded stability region, along with the consistent numerical results obtained from system \eqref{example4}.   Conversely for $aT>1$, ${\rm NU}({\rm Region~B})=2$, so that there is no stability region for $L$ (see Fig. \ref{fig7b}).

Case II: $n\geq 2$.  As shown in Fig. \ref{fig7c}, for $aT<1$, the SCCs separate the complex plane into three regions. The SCCs through self-encirclement result in a formation of an additional region, referred to as Region $C$ which is the stability region $\Omega_T^n$ (highlighted by the green shaded area  in Fig. \ref{fig7c}). The contours of this region are expressed as $L=L_T^n(\beta)$  for $\beta\in [-\beta^*,\beta^*]$. Here, $\beta^*>0$ denotes the smallest positive root satisfying $n\arctan({\beta^* T}/{n})-\arctan({\beta^*}/{a})=0$. 
Moreover, as shown in Fig. \ref{fig7d}, for $aT>1$, the SCCs separate the complex plane into two regions.  Analogous to the situation for $n=1$, there is no stability region. 
\hfill{$\square$}
\end{example}

\begin{figure}
\begin{center}
\centering\includegraphics[width=0.45\textwidth]{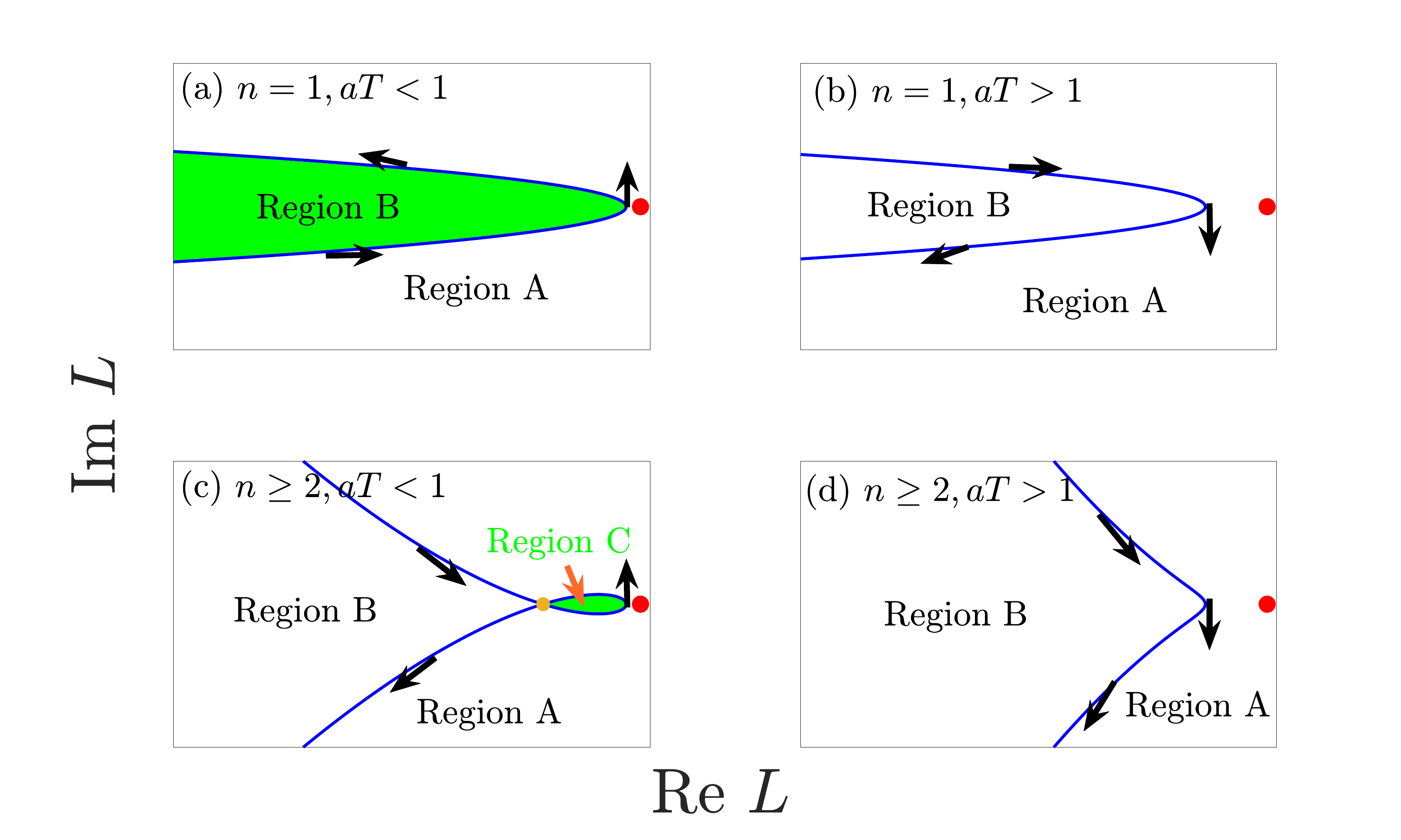}
\subfigure{\label{fig7a}} \subfigure{\label{fig7b}} \subfigure{\label{fig7c}} \subfigure{\label{fig7d}}
\caption{The SCCs, determined by \eqref{stabilityregion2}, have different shapes and directions for different values of $a$, $T$, and $n$. The parameters are, respectively, chosen as: $n=1$ and $aT<1$ (a), $n=1$ and $aT>1$  (b), $n\geq 2$ and $aT<1$ (c), and $n\geq 2$ and $aT>1$ (d).  The  black arrow indicates the increasing direction of the SCCs with respect to $\beta$. Here, the red dots represent the origin, so that  ${\rm NU}({\rm Region}~A)=1$. The value of ${\rm NU}$ for each region is obtained by using Theorem \ref{geometric}, which gives the stability region $\Omega_T^n$ for system \eqref{example4} (see the green shaded region).  The brown dot in (c) represents the inflection point $L^n_T(\beta^*)$. The existence of the stability region depends on the direction of the  SCCs at  $\beta=0$, which is determined by the sign of ${\theta^n_T}'(0)$.} \label{fig7}
\end{center}
\end{figure}

\begin{figure}
\begin{center}
\centering\includegraphics[width=0.45\textwidth]{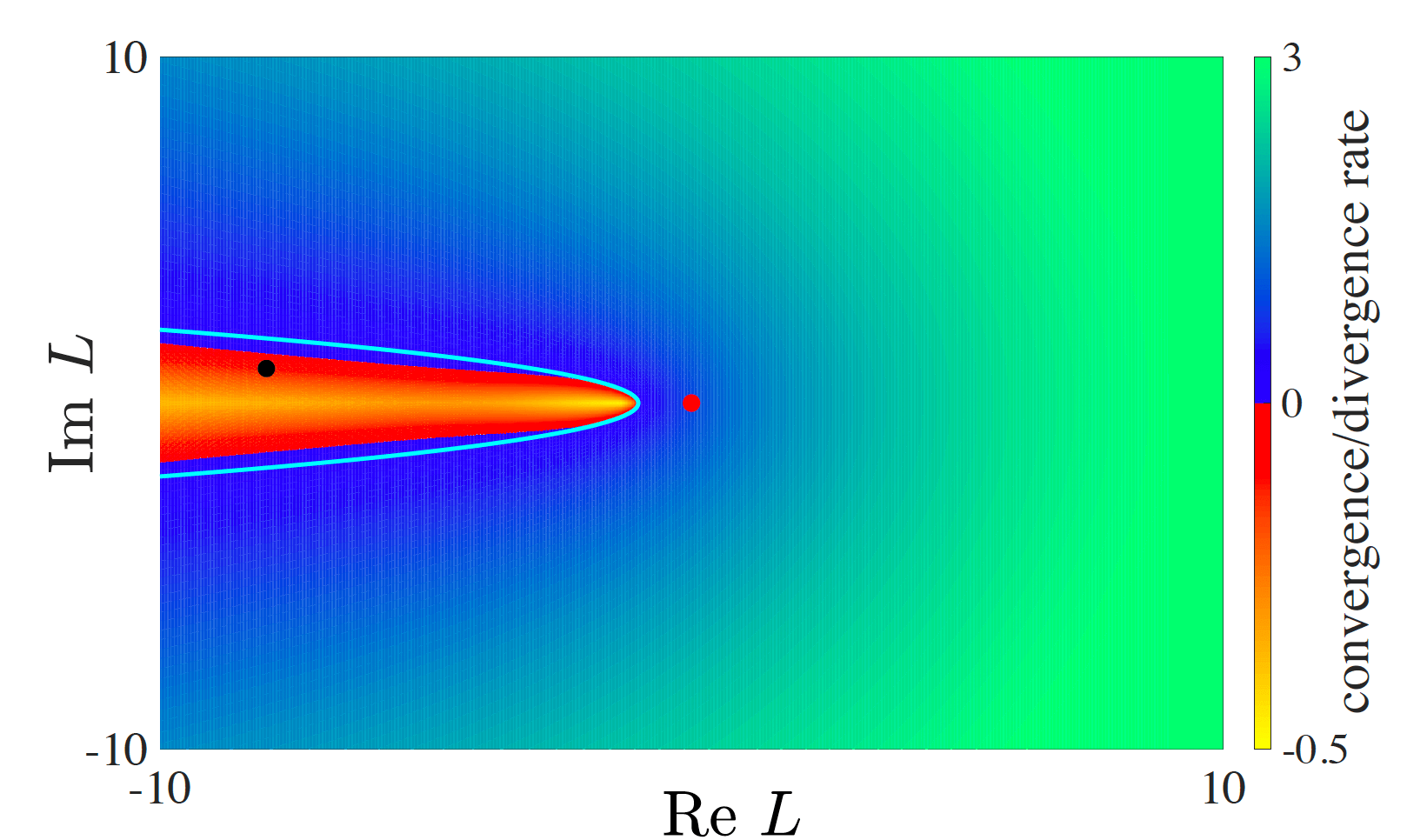}
\caption{The unbounded stability region appears in the left half of the complex plane of $L$ for system \eqref{example4} with $aT<1, n=1$. Here, the contours of the stability regions are depicted by the solid cyan curves according to \eqref{stabilityregion2} with $n=1$, $a=1$, $T=0.5$. The colors, calculated by $\limsup_{t\to+\infty}\log|z_t|/t$, represent the exponential convergence or divergence rates of the trajectories numerically generated by system \eqref{example4}. The black dot  corresponds to the choice of the parameters for $C$ and $S$ in Fig. \ref{fig10a}.} \label{fig14}
\end{center}
\end{figure}

\section{Applications to networked systems}\label{sec5.5}

In this section, we provide several examples of real-world systems, enhancing the wide-ranging applicability of the proposed approach in various domains, such as transportation, engineering, and biomedical engineering.

Both Examples \ref{carfollowing3} and \ref{MAS0} demonstrate the practical application of the proposed approach in analyzing the consensus/stability problem of MASs.   In particular, Example \ref{carfollowing3} investigates the car-following system, providing valuable insights into the complex dynamics of traffic.

\begin{figure}
\begin{center}
\centering\includegraphics[width=0.45\textwidth]{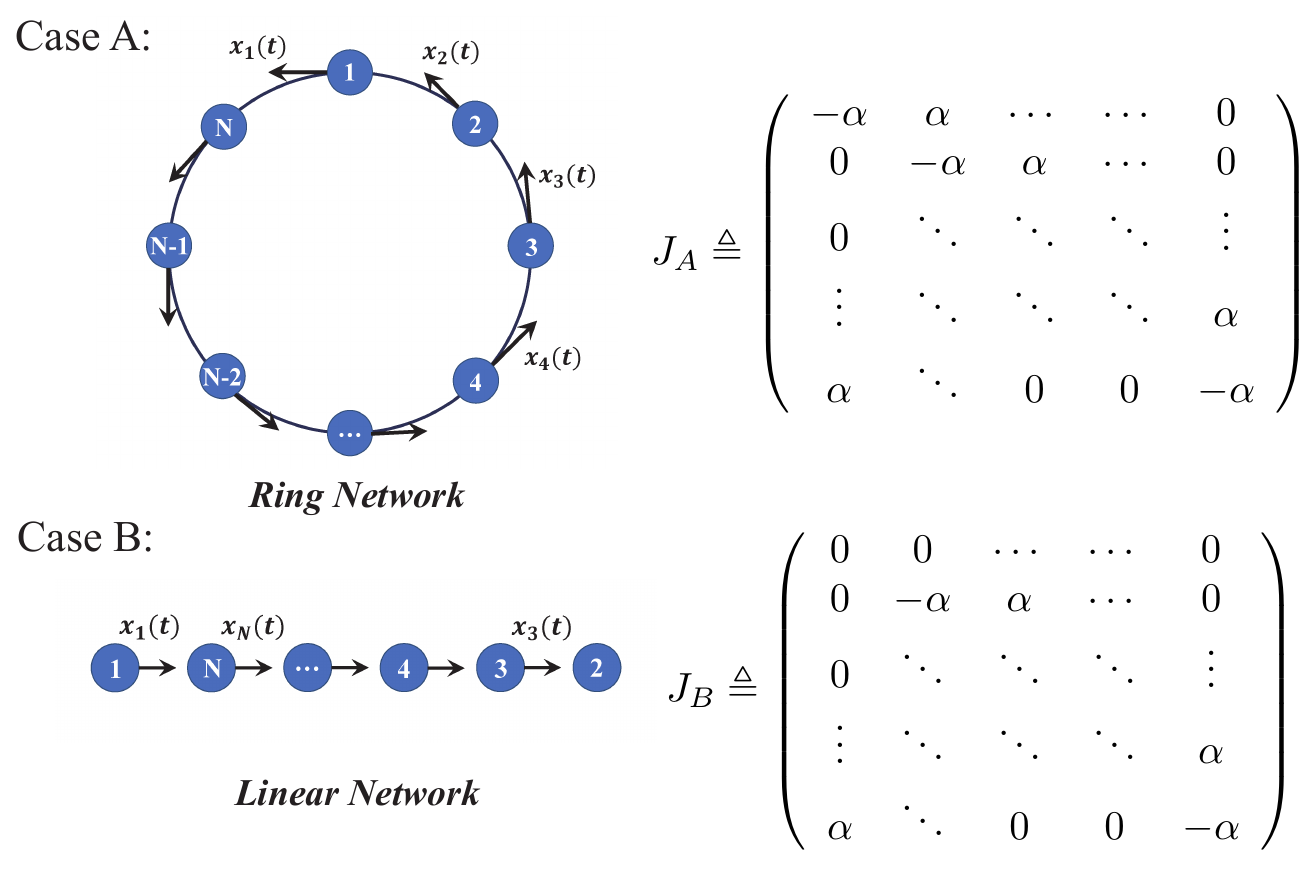}
\caption{The ring (Case A) and the linear (Case B) networks for $N$ vehicles corresponding to the asymmetric matrices $\bm{J}_A$ and $\bm{J}_B$, respectively.}\label{fig24}
\end{center}
\end{figure}

\begin{figure*}
\begin{center}
\centering
\subfigure[$n=1$]{\includegraphics[width=0.4\textwidth]{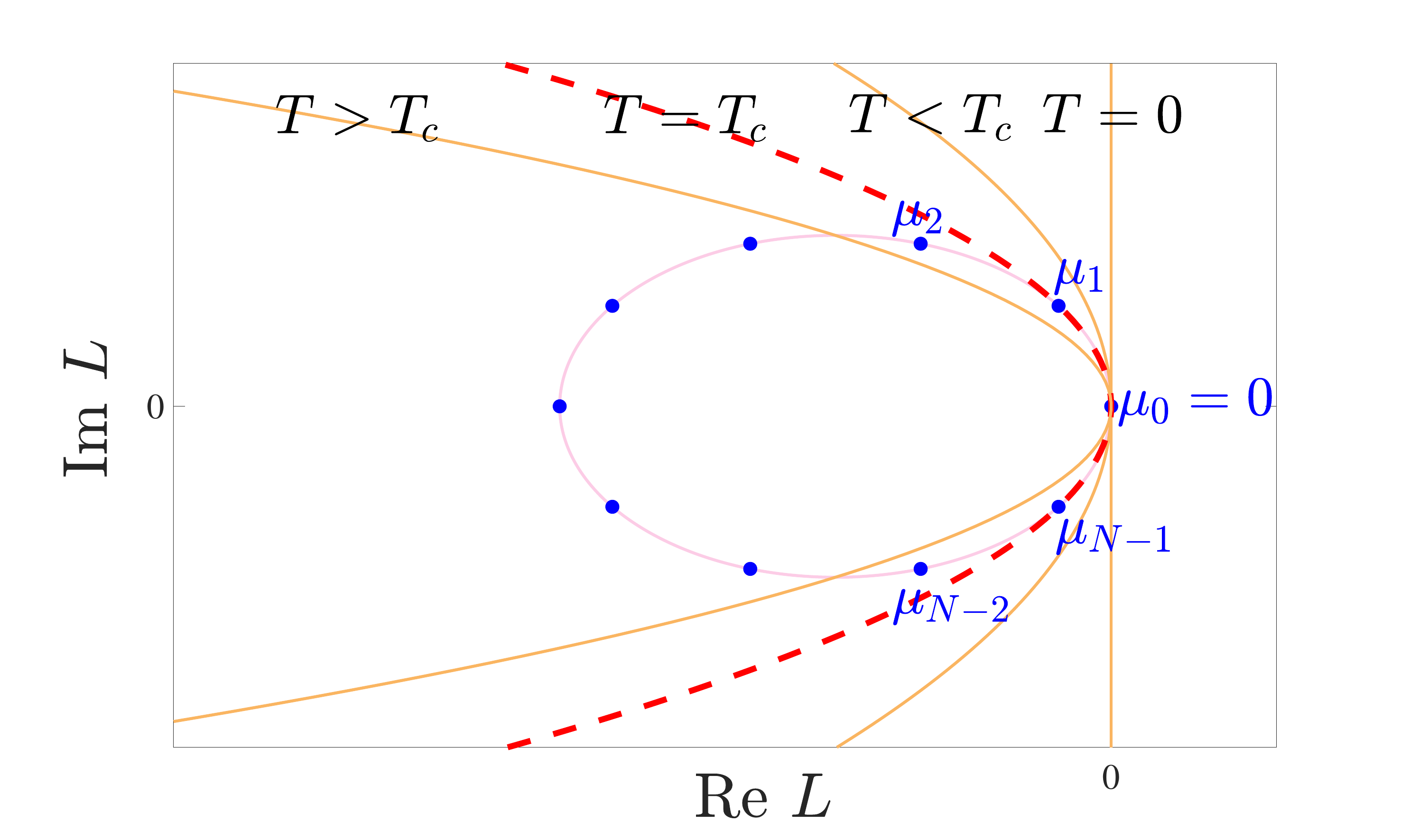}\label{fig15a}}
\subfigure[$n\geq 2$]{\includegraphics[width=0.4\textwidth]{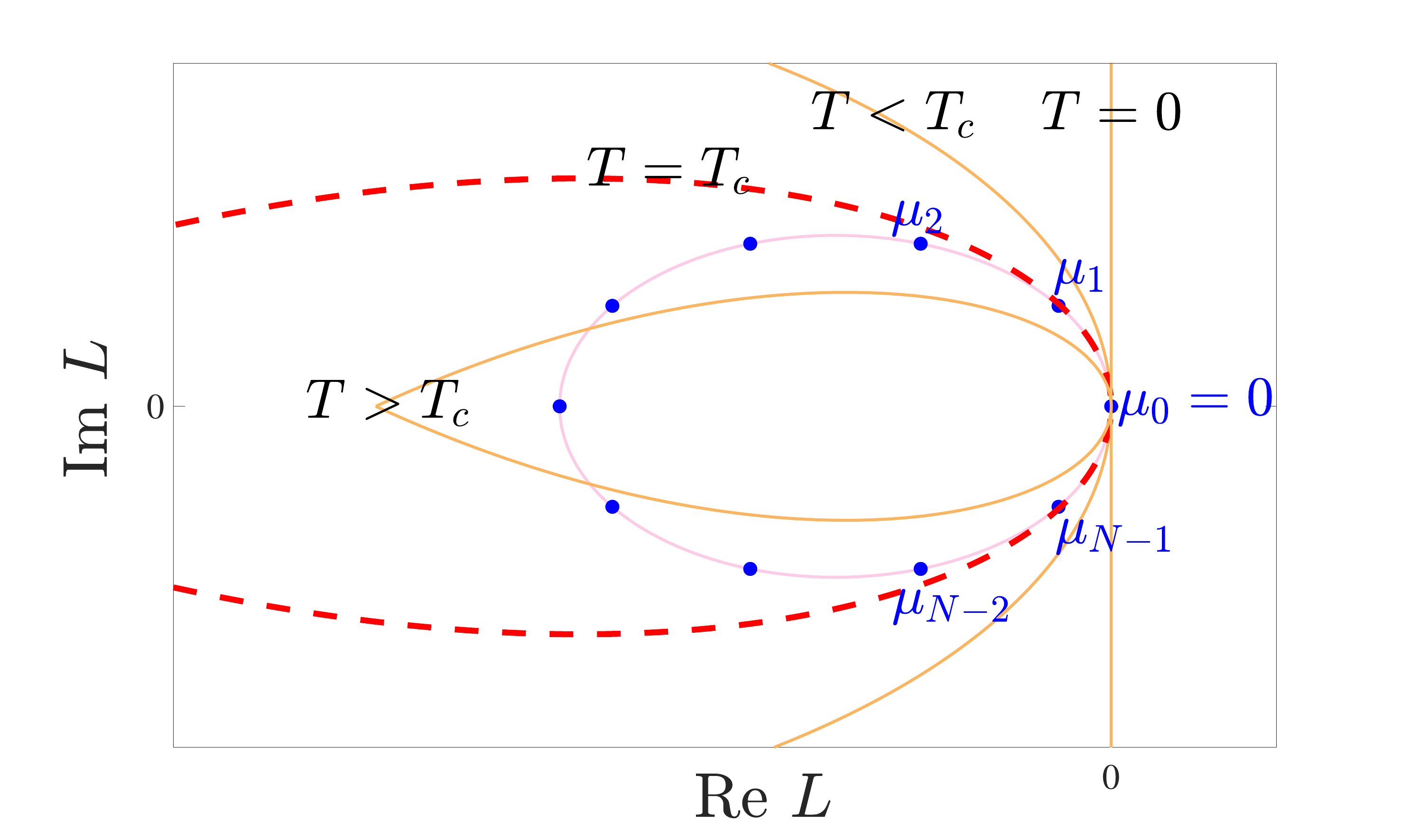}\label{fig15b}}
\caption{The stability region $\Omega_T^n$ for system \eqref{tezhengfangcheng}, circumscribed by the contours (the brown curves) determined by \eqref{stabilityregion3}, shrinks as $T$ increases. For $T=0$, the stability region  reduces to the left half of the complex plane.  For $T>0$, (a) $\Omega_T^n$ remains unbounded as $n=1$, and (b) $\Omega_T^n$ becomes a bounded leaf-shaped region as $n\geq 2$. In Case A, all eigenvalues of the asymmetric network $\bm{J}_{A}$, highlighted by the blue dots, are evenly distributed on the circle obeying the equation $|\mu+\alpha|=\alpha$ (represented by the pink curves) in the complex plane. As $T$ increases from $0$, all eigenvalues (except for $0$) are located within the stability region until it surpasses the critical value $T_c$. The value of $T_c$ is determined by the circumstances in which the conjugate pair of the eigenvalues $\{\mu_1,  \mu_{N-1}\}$ intersect with the contours of $\Omega_T^n$ (see the red dashed curves).} \label{fig15}
\end{center}
\end{figure*}

\begin{example}\label{carfollowing3}
We investigate the consensus problem of a class of car-following systems with distributed delays, which reads
\begin{equation}\label{carfollowing2}
\dot{x}_i=\alpha_i \int_0^{+\infty} h(\tau)\big[x_{i+1}(t-\tau)-x_i(t-\tau)\big]{\rm d}\tau,
\end{equation}
with $i=1,\cdots,N$ and $x_{N+1}=x_1$. Here, $x_i(t)$ is the velocity of the $i$-th vehicle at time $t$, $\alpha_i>0$ is seen as the sensitivity of the $i$-th driver to the velocity difference between the current vehicle and the preceding vehicle, and $h(\tau)$ is the time delay distribution. Using the master stability function (see details in Section \ref{MSF}), we obtain the necessary and sufficient condition on the consensus of system \eqref{carfollowing2} (i.e. $\lim_{t\to+\infty}|{x}_i(t)-{x}_j(t)|=0$ for $1\leq i<j\leq N$): All eigenvalues (except for $0$) of the network matrix $\bm{J}$, which is weighted by $\alpha_i\geq 0$, are located within the stability region $\Omega$ of 
\begin{equation}\label{tezhengfangcheng}
\dot{z}=L \int_0^{+\infty}z(t-\tau)h(\tau){\rm d}\tau.
\end{equation}
In the following, we discuss about two representative cases for the network matrix of $N$ vehicles (see Fig. \ref{fig24}).  Case A: vehicles traveling around a ring network (the asymmetric matrix $\bm{J}_A$  corresponding to the situation where $\alpha_i\equiv \alpha$ in system \eqref{carfollowing2}), and Case B: vehicles arranged along a linear network (the asymmetric matrix $\bm{J}_B$ corresponding to the situation in which $\alpha_1=0$ and $\alpha_i\equiv\alpha$ for $2\leq i\leq N$ in system \eqref{carfollowing2}).  The distribution density is taken as $h(\tau)=h^n_T(\tau)$, which has been defined in \eqref{gammadistribution}.   

Using the arguments similar to those in Example \ref{examplelizi2}, we obtain the stability regions $\Omega^n_T$ for system \eqref{tezhengfangcheng} with $h(\tau)=h^n_T(\tau)$  within the contours that are determined by
\begin{equation}\label{stabilityregion3}
    L=L^n_T(\beta)={\rm i}\beta\left(1+\dfrac{{\rm i}\beta T}{n}\right)^n, ~\beta\in(-\beta^*,\beta^*), ~\beta^*\triangleq\dfrac{n}{T}\tan\left(\dfrac{\pi}{2n}\right).
\end{equation}
Here, we regard $\tan({\pi}/{2})=+\infty$. As seen in Fig. \ref{fig15}, the stability region shrinks as $T$ increases from $0$, i.e., $\Omega^n_{T_1}\subseteq\Omega^n_{T_2}$ for $T_1>T_2$.  As $T=0$, the stability region reduces to the left half of the complex plane. 
 
 In Case A, all eigenvalues of $\bm{J}_A$ are given by 
 \begin{equation}\label{335}
  \mu_l=\alpha\left({\rm e}^{{\rm i}\frac{2\pi l}{N}}-1\right)=2\alpha\sin\left(\dfrac{\pi l}{N}\right){\rm e}^{{\rm i}\left(\frac{\pi}{2}+\frac{\pi l}{N}\right)}, ~ l=0,1,\cdots,N-1.
 \end{equation}
As $T$ increases from $0$, all eigenvalues (except for $0$) are located within $\Omega^n_{T}$  until reaching a critical value $T_c$, where one of the eigenvalues (except for $0$) touches the contours of $\Omega^n_{T}$ (see the red dashed curves in Fig. \ref{fig15}). Now, we turn to calculate the critical value $T_c$, which is written as
\begin{equation}\label{linjiezhi}
 \mu_l=L^n_{T_c}(\beta_c),~ l\neq 0.
\end{equation}
Write the contour \eqref{stabilityregion3} using the  polar coordinates as $L^n_T(\beta)=r^n_T(\beta){\rm e}^{{\rm i}\theta^n_T(\beta)}$, where
$$
 r^n_T(\beta)=|\beta|\left(1+\dfrac{\beta^2T^2}{n^2}\right)^{\frac{n}{2}},  ~~ 
 {\theta}^n_T(\beta)=\dfrac{\pi}{2}+n\arctan\left(\dfrac{\beta T}{n}\right).
  $$
Then, taking the absolute value and argument value separately on both sides of \eqref{linjiezhi}, we obtain that
\begin{equation}\label{111}
\beta_c\left(1+\dfrac{\beta_c^2T_c^2}{n^2}\right)^{\frac{n}{2}}=2\alpha\sin\left(\dfrac{\pi l}{N}\right), ~\beta_c>0, ~ 0<k\leq\dfrac{N}{2}
\end{equation}
and
\begin{equation}\label{222}
n\arctan\left(\dfrac{\beta_cT_c}{n}\right)=\dfrac{\pi l}{N}.
\end{equation}
It is important to note that the contours specified in \eqref{stabilityregion3} as well as all eigenvalues are symmetric along the real axis. Therefore, we select the critical value situated above the real axis, resulting in $\beta_c>0$ while $0<l\leq{N}/{2}$. It follows from \eqref{222} that ${\beta_c T_c}/{n}=\tan\left({\pi l}/{Nn}\right)$. Substituting it into \eqref{111} yields:
$$
\beta_c=\dfrac{2\alpha\sin\left(\dfrac{\pi l}{N}\right)}{\left[1+\tan^2\left(\dfrac{\pi l}{Nn}\right) \right]^\frac{n}{2}}, ~~T_c=\dfrac{n\tan\left(\dfrac{\pi l}{Nn}\right)\left[1+\tan^2\left(\dfrac{\pi l}{Nn}\right) \right]^\frac{n}{2}}{2\alpha\sin\left(\dfrac{\pi l}{N}\right)}.$$
 Taking $l=1$ further yields the sufficient and necessary condition on the consensus of system \eqref{carfollowing2} for Case A as:
\begin{equation}\label{consensus}
0<T<T_c=\dfrac{n\tan\left(\dfrac{\pi }{Nn}\right)\left[1+\tan^2\left(\dfrac{\pi }{Nn}\right) \right]^\frac{n}{2}}{2\alpha\sin\left(\dfrac{\pi }{N}\right)}.
\end{equation}
To numerically validate the analytically-obtained results, we choose four combinations for the pair $(n,N)$, viz., (1,10), (1,5), (2,10), and (2,5). For each combination,  system \eqref{carfollowing2} is numerically implemented with the parameters $(\alpha,T)\in(0,2)\times (0,2)$. The divergence or convergence synchronization rates are computed by $\limsup_{t\to+\infty}{\max_{1\leq i<j\leq N}\log|x_i(t)-x_j(t)|}/{t}$. As shown in Fig. \ref{fig17}, the analytical criteria in \eqref{consensus} are confirmed by our numerical results.

\begin{figure}
\begin{center}
\centering
\includegraphics[width=0.45\textwidth]{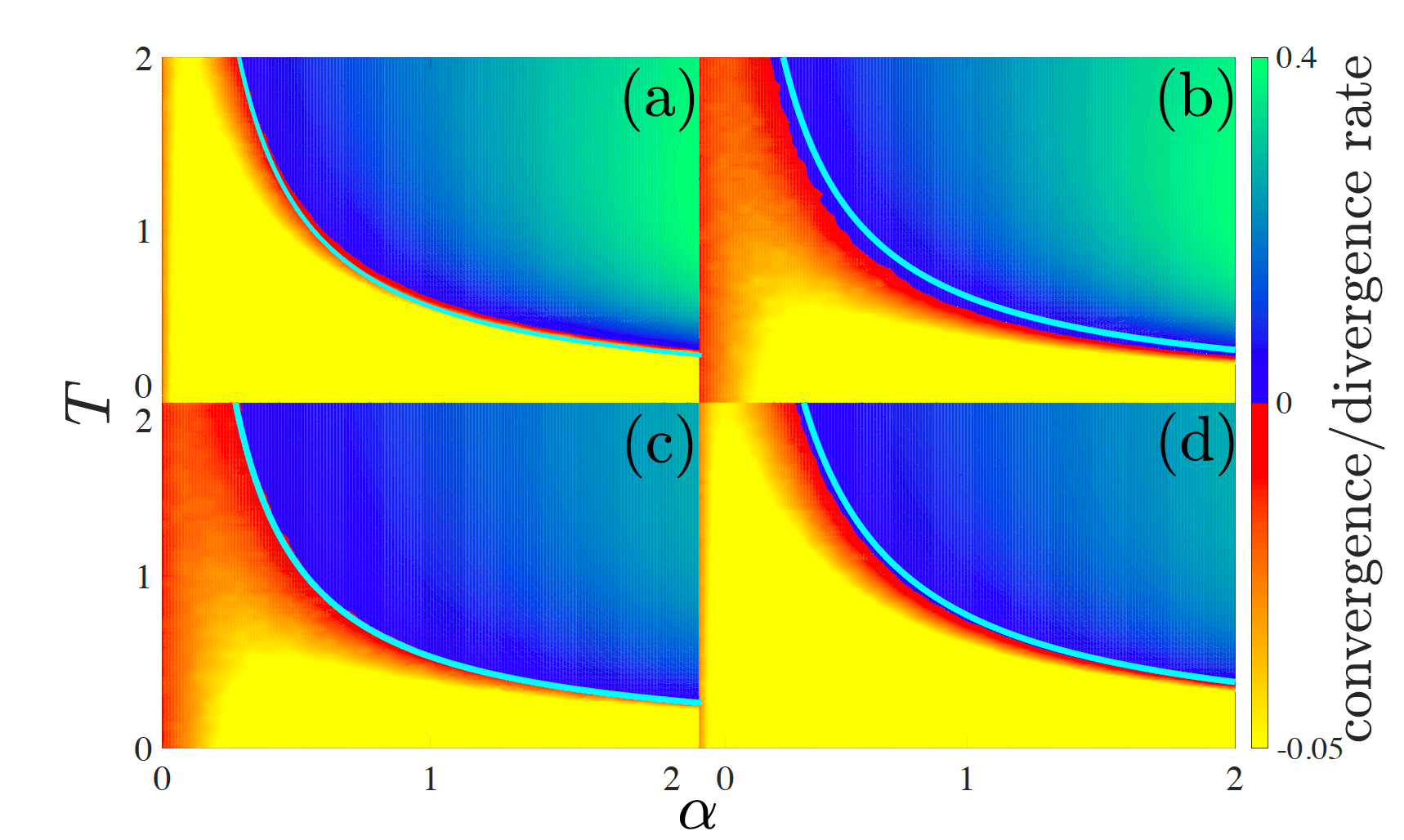}\label{fig17}
\caption{The stability regions with respect to $\alpha$ and $T$ for system \eqref{carfollowing2} with the asymmetric network $\bm{J}_A$, delay distribution $h^n_T(\tau)$ and different pairs of $(n,N)$. The contours of the stability region are depicted by the solid cyan curves according to \eqref{consensus}.  
The colors represent the divergence or convergence synchronization rates of the trajectories numerically generated by system \eqref{carfollowing2}, which are computed by $\limsup_{t\to+\infty}{\max_{1\leq i<j\leq N}\log|x_i(t)-x_j(t)|}/{t}$. Four combinations for the pair $(n,N)$ are used,  viz., $(1,10)$ (a), $(1,5)$ (b), $(2,10)$ (c), and $(2,5)$ (d).} \label{fig17}
\end{center}
\end{figure}

In Case B, all eigenvalues (except for $0$) of $\bm{J}_B$ are given by 
 \begin{equation}\label{336}
  \mu_l=-\alpha, ~l=1,\cdots,N-1.
 \end{equation}
 By \eqref{stabilityregion3}, we have $$\Omega_T^n\cap\mathbb{R}=\left(-\dfrac{n}{T}\tan\left(\dfrac{\pi}{2n}\right)\left[1+\tan^2\left(\dfrac{\pi }{2n}\right) \right]^\frac{n}{2},0\right).$$ Consequently, the sufficient and necessary condition on 
 the consensus of system \eqref{carfollowing2} for Case B becomes 
$$
0<T<T_c\triangleq\dfrac{n}{\alpha}\tan\left(\dfrac{\pi}{2n}\right)\left[1+\tan^2\left(\dfrac{\pi }{2n}\right) \right]^\frac{n}{2},
$$
 where we regard $\tan({\pi}/{2})=+\infty$. \hfill{$\square$}  \end{example} 


The next example investigates the application of proportional and derivative (PD) control with delays for controlling a second-order system coupled with \textit{random} networks. The second-order system is usually used to describe various physical phenomena in the context of mechanics and power grid, while the PD control is widely used in control engineering.

\begin{example}\label{MAS0}
Consider a general second-order MAS with $N$ agents, which reads
\begin{equation}\label{MAS}
\dot{x}_i(t)=v_i(t),~~\dot{v}_i(t)=av_i(t)+bx_i(t)+u_i(t),
\end{equation}
where $x_i(t)$, $v_i(t)$, and $u_i(t)$ denote position, velocity, and input of agent $i$, respectively, and $i=1,2,\cdots,N$. We introduce a PD control protocol with the distributed time delays for the input as
\begin{equation}\label{251}
\begin{aligned}
u_i(t)&=k_1\sum_{j=1}^N a_{ij}     \int_0^{+\infty}h(\tau)x_j(t-\tau){\rm d}\tau\\
&~~~ +k_2\sum_{j=1}^N a_{ij}     \int_0^{+\infty}h(\tau)v_j(t-\tau){\rm d}\tau.
\end{aligned}
\end{equation}
Here, $k_{1,2}$ represent the proportional and derivative gains, respectively, $a_{ij}$ denotes the adjacency weight of the connections for MAS network, and
$h(\tau)$ is the density distribution of time delays. Significant contributions have been made in prior research studies concerning the case involving discrete time delays (i.e., $h(\tau)=\delta(\tau-T)$)\cite{b47,b48}.  Here, we investigate the scenario involving the distributed time delays, where the density distribution is characterized by $h_T(\tau)=\frac{1}{T}{\rm e}^{-\frac{\tau}{T}}$.

Using similar argument to Section \ref{MSF}, we obtain the sufficient and necessary condition on the stability of system \eqref{MAS} with controller \eqref{251} (i.e. $\lim_{t\to+\infty} |{x}_i(t)|=\lim_{t\to+\infty} |v_i(t)|=0$ for all $i$) as:  All eigenvalues of $\bm{J}=\{a_{ij}\}_{N\times N}$ are located within the stability region $\Omega_T$ of the characteristic equation
\begin{equation}\label{MAS2}
\lambda^2-a\lambda-b-(k_1+k_2\lambda)L/(1+\lambda T)=0.
\end{equation}
It is noted here that the TCE is transformed as an equation of polynomial due to the particularly-used density distribution $h_T(\tau)$.   Thus, the SCCs becomes
\begin{equation}\label{MAS3}
   L=L_T(\beta)=\dfrac{(-\beta^2-b-{\rm i}a\beta)(1+{\rm i}\beta T)}{(k_1+{\rm i}k_2\beta)}.
\end{equation}
To this end, we are in a position to present the change of the stability regions using the geometric approach. As shown in Fig. \ref{fig18a}, the SCCs, as determined by the representation in \eqref{MAS3}, own different shapes and directions for different values of $T\geq 0$. As $T=0$, the SCCs divide the complex plane into two regions, where the left one defines the stability region $\Omega_0$ of Eq.~\eqref{MAS2}. As $T$ increases, the shapes and directions of the SCCs remain unchanged until $T$ surpasses a critical value $T_{c_1}$ (see Fig. \ref{fig18b}).  As $T$ surpasses $T_{c_1}$, the SCCs divide the complex plane into three regions.  In comparison to the case of $T<T_{c_1}$, the additional region is small, formed by the self-encirclement of the SCCs (see Fig. \ref{fig18c}).  The stability regions $\Omega_T$, highlighted in green, shrinks as $T$ increases. As $T$ exceeds a critical value $T_{c_2}$, the directions of the SCCs are reversed, leading to the disappearance of the stability region (see Fig. \ref{fig18d}).

\begin{figure}
\begin{center}
\centering
\includegraphics[width=0.5\textwidth]{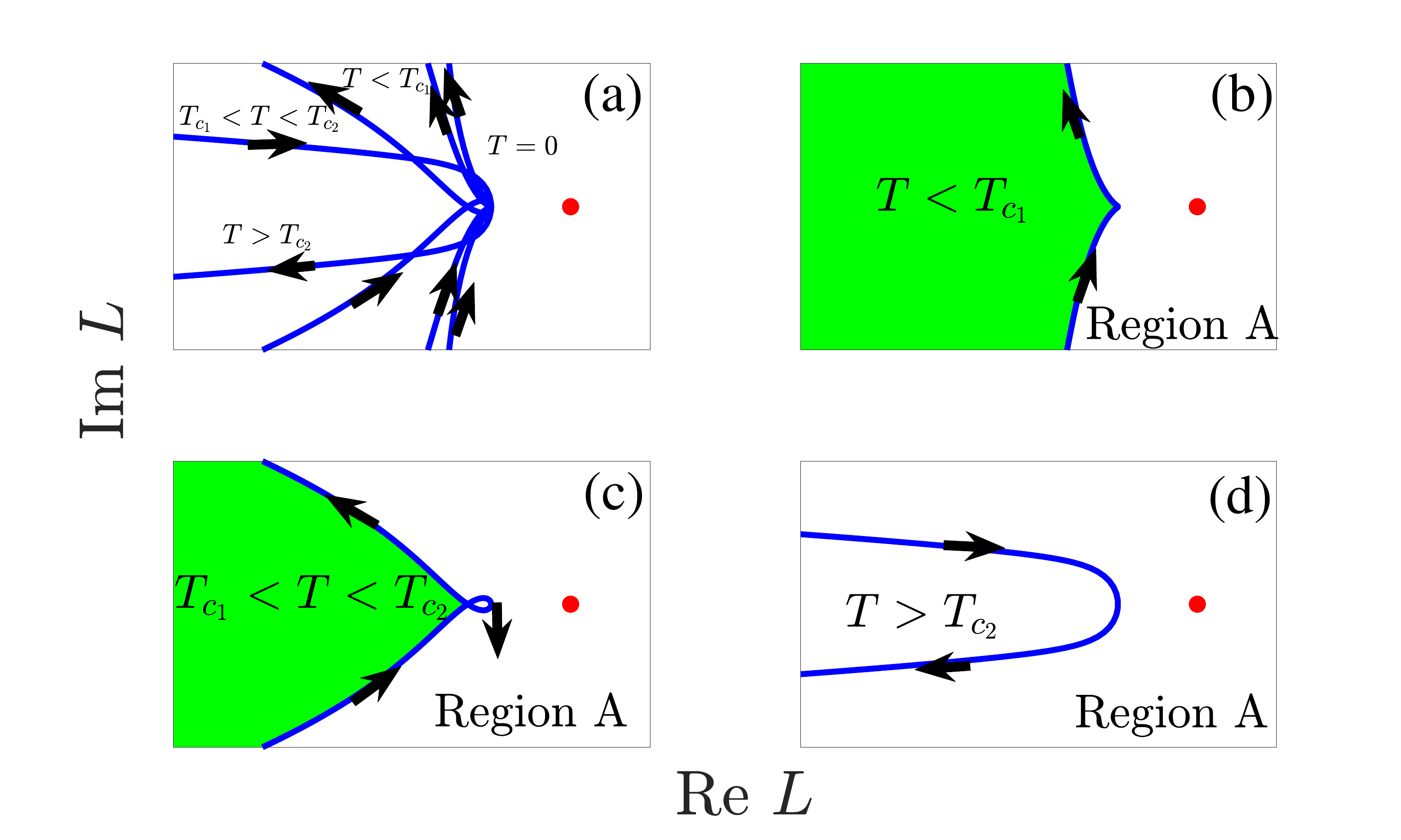}
\subfigure{\label{fig18a}}
\subfigure{\label{fig18b}}
\subfigure{\label{fig18c}}
\subfigure{\label{fig18d}}
\caption{ The SCCs and the corresponding stability regions for Eq.~\eqref{MAS2} change with $T$, where $a=b=k_1=1$ and $k_2=1.1$.
(a) The SCCs, determined by \eqref{MAS3}, own different shapes and directions for different values of $T\geq 0$.   The stability region $\Omega_T$, highlighted in green, is formed by different shapes of the SCCs for $T<T_{c_1}$ (b) and for $T_{c_1}<T<T_{c_2}$ (c). The stability region vanishes for $T>T_{c_2}$ (d). 
The black arrow indicates the increasing direction of the SCCs with respect to $\beta$. Here, the red dots represent the origin, and ${\rm NU}({\rm Region}~A)=1$ accordingly.  The value of ${\rm NU}$ for each region is computed using Theorem \ref{geometric}, leading to the stability (green-shaded) region $\Omega_T$ for Eq.~\eqref{MAS2}.} \label{fig18}
\end{center}
\end{figure}

Next, we are to seek the values of $T_{c_1,c_{2}}$. From the parametric representation in \eqref{MAS3},  the polar coordinates for the SCCs can be further obtained as 
\begin{equation}\label{MAS4}
\theta_T(\beta)=\pi+\arctan\dfrac{a\beta}{\beta^2+b}+\arctan \beta T-\arctan\dfrac{k_2\beta}{k_1}
\end{equation}
and
\begin{equation}\label{MAS5}
r_T(\beta)=\dfrac{\sqrt{(\beta^2+b)^2+a^2\beta^2}\sqrt{1+\beta^2T^2}}{\sqrt{k_1^2+k_2^2\beta^2}}.
\end{equation}
From the direction of the SCCs at $\beta=0$ (see Fig. \ref{fig18b} and Fig. \ref{fig18c}), we obtain the critical value $T_{c_1}=\frac{k_2}{k_1}-\frac{a}{b}=0.1$ from $\theta'_T(0)=0$.  On the other hand,  $\theta_T(\beta)=\pi+\left(a+\frac{k_1}{k_2}-\frac{1}{T}\right)\frac{1}{\beta}+o\left(\frac{1}{\beta}\right)$ as $\beta\to+\infty$.  Thus, the critical value $T_{c_2}\approx 0.5238$ from $a+\frac{k_1}{k_2}-\frac{1}{T}=0$. Hence, the contours of the stability region $\Omega_T$ can be parameterized as: 
$$
L=
\left\{\begin{array}{ll}
L_T(\beta),  & \beta\in\mathbb{R}, ~T\leq T_{c_1},\\
L_T(\beta),  &\beta\in (-\infty, -\beta^*]\cup[\beta^*,+\infty), ~ T_{c_1}<T<T_{c_2},
\end{array}\right.
$$
where $\beta^*>0$ is the root of $\theta_T(\beta^*)=\pi$ as $T_{c_1}<T<T_{c_2}.$

\begin{figure}
	\begin{center}
		\centering
		\includegraphics[width=0.45\textwidth]{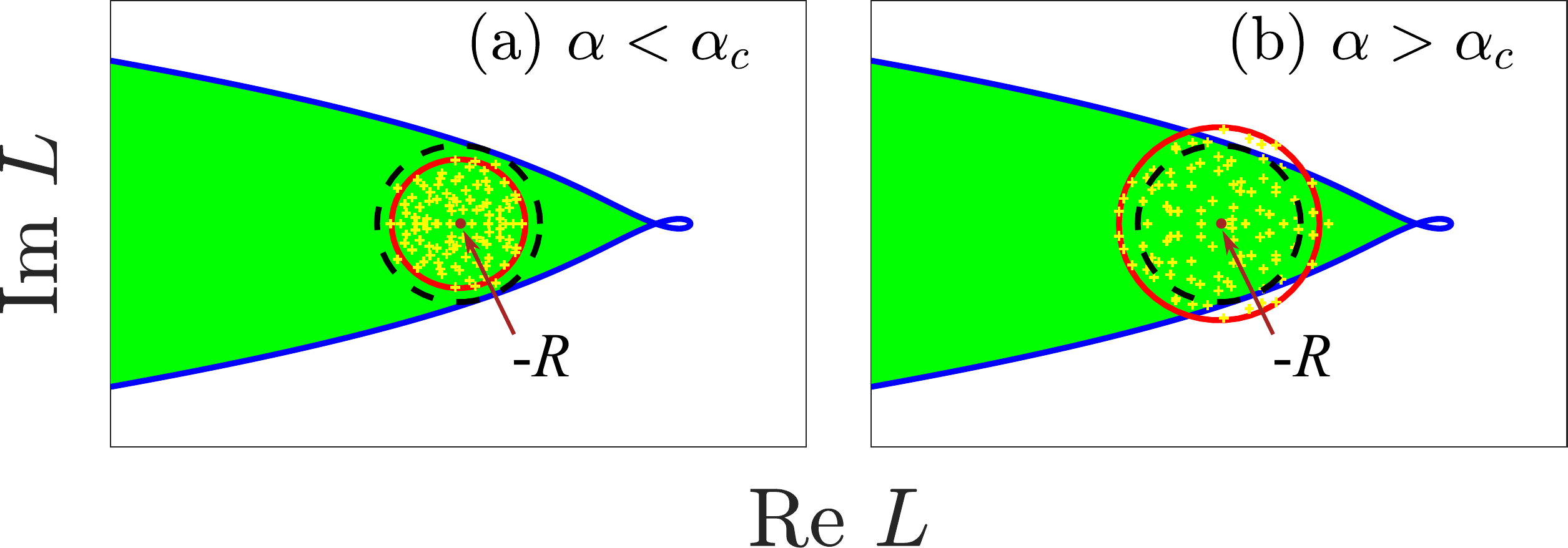}
		\subfigure{\label{figma}}
		\subfigure{\label{figmb}}
		\caption{The eigenvalue distribution of the random network matrix $\bm{J}=-R\bm{I}_N+\alpha\bm{\Xi}_N$ changes with $\alpha$.  For sufficiently large $N$, the eigenvalues of $\bm{J}$ (yellow crossings) are approximated as uniformly distributed within a circle centered at $-R$ (brown dots) obeying Eq. \eqref{1057} (red curves). As $\alpha$ increases from $0$, all eigenvalues are located within $\Omega_T$ (green-shaded regions) until it surpasses the critical value $\alpha_c$ determined by \eqref{1102}, where the circle \eqref{1057} becomes tangent to the SCCs (black dashed curves).} \label{figm}
	\end{center}
\end{figure}

Now, returning to the stability problem of system \eqref{MAS}, we consider a noise-perturbed self-negative feedback network $\bm{J}=-R\bm{I}_N+\alpha\bm{\Xi}_N$ where $\bm{\Xi}_N\triangleq\{\xi_{ij}\}_{N\times N}$, each $\xi_{ij}$ are independently sampled from a uniform distribution within the interval  $[-1,1]$, and 
$\alpha$ is the noise strength.  According to the Circular Law (see Section \ref{CL}, Theorem \ref{CL1}), for sufficiently large $N$,  the eigenvalues of $J = \{a_{ij}\}_{N\times N}$ are approximated as uniformly distributed within a circle obeying the equation
\begin{equation}\label{1057}
\left({\rm Re}~L+R\right)^2+\left({\rm Im}~L\right)^2=\dfrac{N\alpha^2}{3}.
\end{equation}
Fix $N$, $T\geq 0$, and $R>0$. When $\alpha=0$, all the eigenvalues of $\bm{J}$ collapse to a single point $-R$ (brown dots in Fig. \ref{figm}). Suppose that $-R\in\Omega_T$. As $\alpha$ increases from $0$, all eigenvalues are located within $\Omega_T$ until reaching a critical value $\alpha_c$, when the circle \eqref{1057} becomes tangent to the SCCs (see black dashed curves in Fig. \ref{figm}).  Here, the critical value $\alpha_c=\alpha_c(T,R,N)$ is obtained as follows:
\begin{equation}\label{1102}
\alpha_c=\sqrt{\dfrac{3}{N}}\inf_{\beta\in\mathbb{R}}|L_T(\beta)+R|.
\end{equation}
To numerically validate the analytically-obtained results, we choose four different values of $R$. For each value,  system \eqref{MAS} with controller \eqref{251} is numerically implemented for $1000$ times with the parameters $(\alpha,T)\in(0,4)\times (0,0.3)$. As shown in Fig. \ref{fign}, the analytical criteria in \eqref{1102} are confirmed by our numerical results.\hfill{$\square$}

\begin{figure}
	\begin{center}
		\centering
		\includegraphics[width=0.45\textwidth]{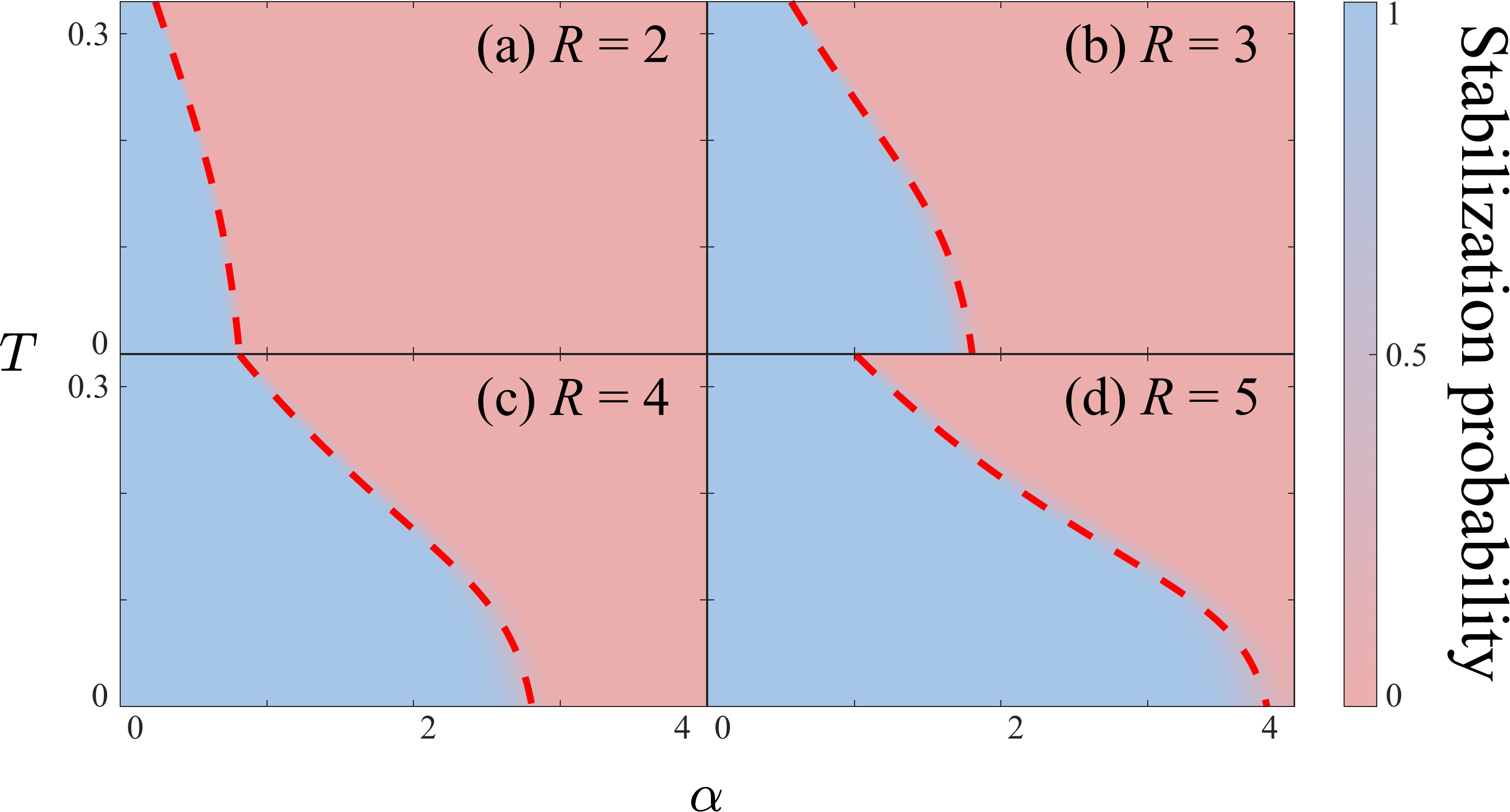}
		\subfigure{\label{figna}}
		\subfigure{\label{fignb}}
		\caption{For different values of $R$, depicted is the probability of successful stabilization for the controlled system \eqref{MAS} using the controller \eqref{251}, with different mean values $T$ of time delay distribution as well as different noise strengths $\alpha$ in random networks. The color of each point represents the frequency of successful stabilization from $1000$ numerical realizations. The red dashed curves correspond to the boundary obtained from \eqref{1102}. The other parameters are $a = b = k_1 = 1$, $k_2 = 1.1$, and $N = 100$.} \label{fign}
	\end{center}
\end{figure}




\end{example}

\begin{figure*}[h]
\begin{center}
\centering
\subfigure[]{\includegraphics[width=0.49\textwidth]{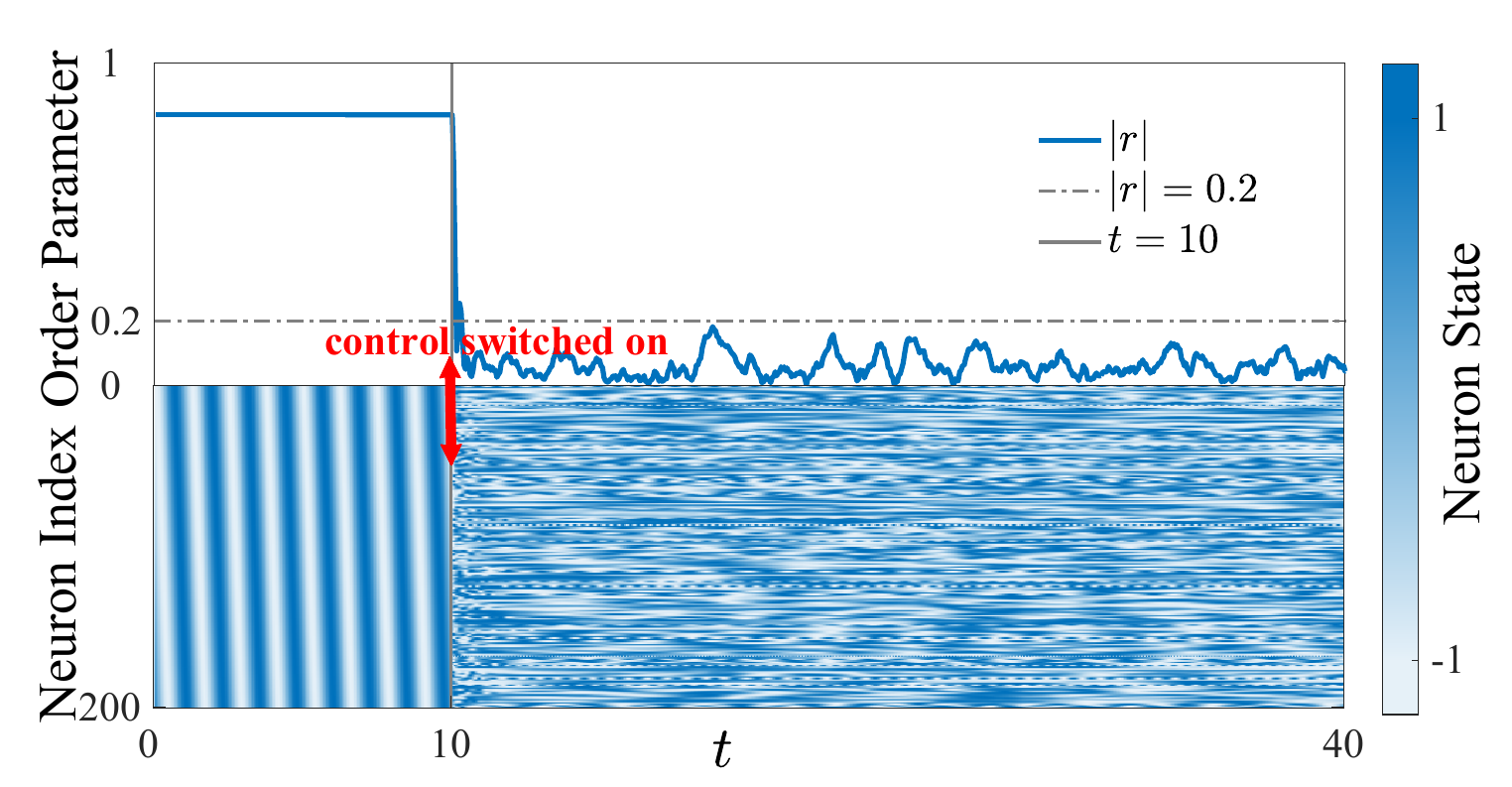}\label{fig10a}}
\subfigure[]{\includegraphics[width=0.49\textwidth]{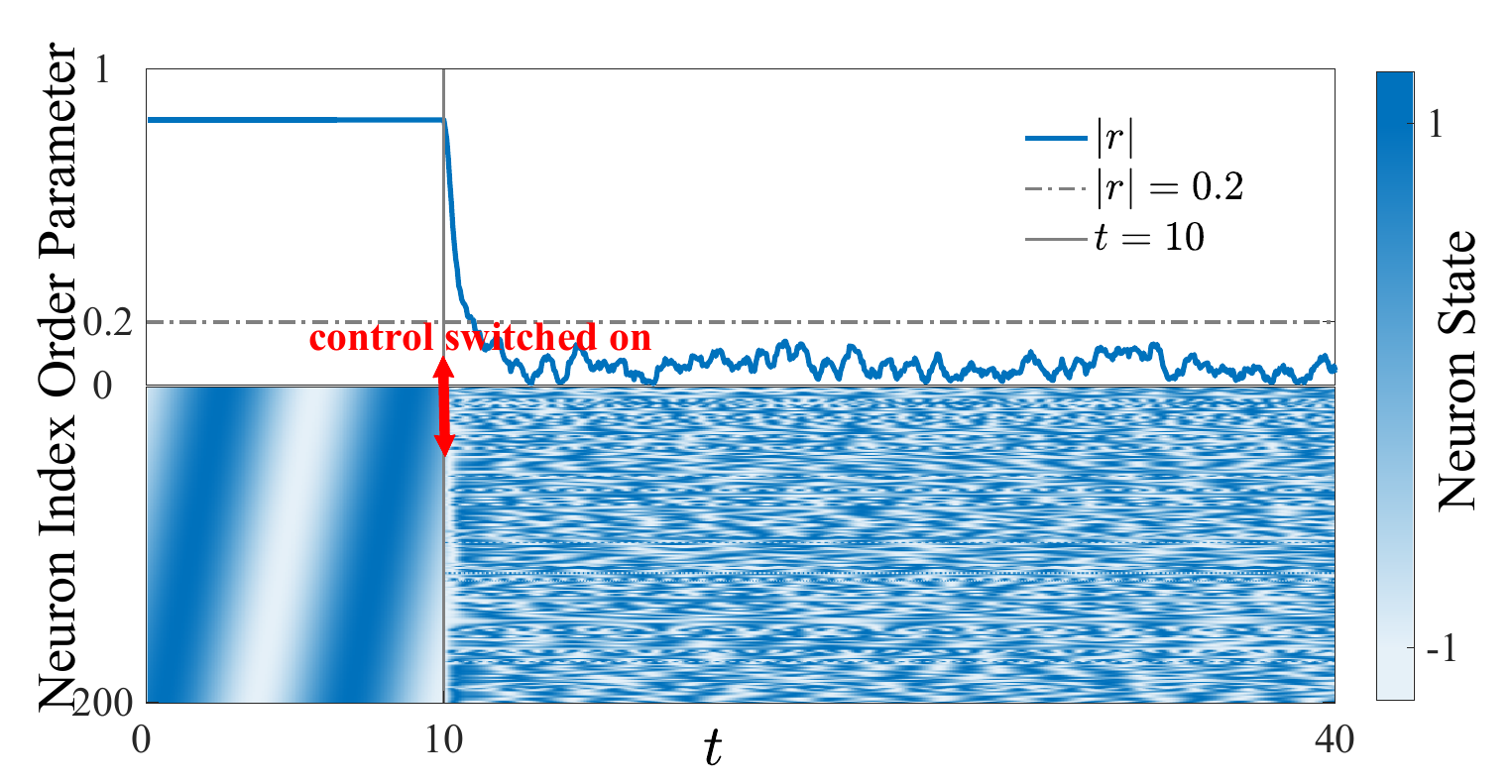}\label{fig10b}}
\caption{Dynamics for system \eqref{kuramoto},  the $200$ coupled Kuramoto's oscillators, with and without the controller \eqref{kuramoto2}.  The upper panels in (a,b) depict how the absolute value of the order parameter $r$, as defined in \eqref{centroid1}, changes with time.  The lower panels in (a,b) depict the individual dynamics for the $200$ coupled Kuramoto's oscillators.  Here, each color represents the sinusoidal value of the oscillator's phase.  The coupled oscillators show phase synchronization in the absence of control ($t<10$). When the feedback, with heterogeneous delays \eqref{kuramoto2} and suitable coupling gain, is switched on ($t>10$), the phase synchronization is eliminated. In (a), the parameters are taken as $d=0$, $K=4$, $C=-16$, $S=2$, and $\tau_{ij}$ obeys an exponential distribution with mean value $0.5$.  In (b),  the parameters are set as $d=2.5$, $K=4$, $C=-1$, $S=-3$, and $\tau_{ij}\equiv 0.5$. Additionally, the parameters $C$ and $S$ used in (a,b) correspond to the black dots in Figs. \ref{fig14} and \ref{fig12b}, respectively, which represent $L=(C+{\rm i}S)/2$.} \label{fig10}
\end{center}
\end{figure*}

The final example goes to an application of our proposed approach to  realizing synchronization elimination in a large population of coupled oscillators. The proposed controller in this example has potential use in deep brain stimulation, especially for the remedy of mental disorders including Parkinson's disease and epilepsy. The application background will be further discussed about in Remark \ref{background}.

\begin{example}\label{qutongbu}
We consider a system of coupled Kuramoto's oscillators described by:
\begin{equation}\label{kuramoto}
\dot{\theta}_i(t)=\omega_i+\dfrac{K}{N}\sum_{j=1}^N\sin[\theta_j(t)-\theta_i(t)]+u_i(t), ~ i=1,\cdots, N.
\end{equation}
Here, the $i$-th oscillator, represented by the phase dynamics $\theta_i$, is supposed to rotate on a unit circle \cite{b39,b56}.  Moreover, $\omega_i$ represents the natural frequency of the $i$-th oscillator, $K$ is the global coupling strength, $N$ is the total number of oscillators (assumed to be sufficiently large), and $u_i$ is the control input.  From a viewpoint of applications, all the natural frequencies are not necessarily be identical, which is supposed to obey a unimodal Cauchy-Lorentz distribution as
$$
g(\omega)=\dfrac{1}{\pi}\dfrac{1}{(\omega-d)^2+1}.
$$
As $K=0$, each oscillator rotates at its own natural frequency, showing desynchronization state.  For $K>2$, the coupled oscillators without control ($u_i(t)\equiv 0$) show a phenomenon of phase synchronization (see Fig. \ref{fig10} as $t<10$). In order to \textit{eliminate this phase synchronization}, we introduce feedback couplings with heterogeneous delays as:
\begin{equation}\label{kuramoto2}
\begin{array}{l}
\displaystyle u_i(t)=\dfrac{C}{N}\sum_{j\neq i}^N\sin[\theta_j(t-\tau_{ij})-\theta_i(t)] \\
\displaystyle  \hspace{2cm} +\dfrac{S}{N}\sum_{j\neq i}^N\cos[\theta_j(t-\tau_{ij})-\theta_i(t)].  
\end{array}
\end{equation}
Here, $C$ and $S$ are constant real-valued coupling strengths to be designed while the delay $\tau_{ij}$ means the time required to transfer a signal from the $j$-th oscillator to the $i$-th one. In real neuronal systems, oscillators are always spatially randomly and sparsely connected, which indicates that time delays obey a specific distribution $h(\tau)$ rather than being identical. Denote by \begin{equation}\label{centroid1}
r(t)\triangleq\dfrac{1}{N}\sum_{j=1}^N {\rm e}^{{\rm i}\theta_j(t)}
\end{equation}
the order parameter.  Using the classic mean-field method (see Remark \ref{Ott} and Section \ref{Ottantenson}), the macroscopic dynamics of $r(t)$ obeys:
\begin{equation}\label{kuramoto3}
\begin{aligned}
\dot{r}=\left(\dfrac{K}{2}-1+{\rm i}d\right)r+{L}\int_0^{+\infty}r(t-\tau)h(\tau){\rm d}\tau\\
-\dfrac{K}{2}|r|^2r-{\overline{L}}r^2 \int_0^{+\infty}\overline{r(t-\tau)}h(\tau){\rm d}\tau,
\end{aligned}
\end{equation}
where $L\triangleq (C+{\rm i}S)/2$ is the complex-valued coupling strength. Notice that $r(t)\equiv 0$ and $|r(t)|>0$ correspond, respectively, to the desynchronization state and the synchronization state (see Remark \ref{tongbu}). Hence, linearization of system \eqref{kuramoto3} in the vicinity of $r=0$ yields a time-delayed dynamical system
as 
\begin{equation}\label{kuramoto4}
\dot{r}=\left(\dfrac{K}{2}-1+{\rm i}d\right)r+{L}\int_0^{+\infty}r(t-\tau)h(\tau){\rm d}\tau.
\end{equation}
Therefore, our goal is to find a suitable complex-valued $L$ as the coupling gain to stabilize the zero solution of system \eqref{kuramoto4} for $K>2$. Such a goal has been realized for some particular cases in Section \ref{sec5.3}. In the following, we discuss about two special cases, viz., Case A: $\tau_{ij}\equiv T$ is a constant, i.e. $h(\tau)=\delta(\tau-T)$, and Case B: $d=0$ and $\tau_{ij}$ obeys an exponential distribution with mean value $T>0$, i.e. $h(\tau)=\frac{1}{T}{\rm e}^{-\frac{\tau}{T}}$.  For both cases, using the analytical arguments in a normal way yields a stability region for system \eqref{kuramoto4} if and only if $({K}/{2}-1)T<1$.  Actually, as $({K}/{2}-1)T<1$, appropriate values for the coupling strengths of $C$ and $S$ can be selected so that $L=(C+{\rm i}S)/2$ falls within the stability region (refer to Figs. \ref{fig14} and \ref{fig12b}).  The effectiveness of our proposed feedback with heterogeneous delays has been confirmed by the numerical results depicted in Fig. \ref{fig10}.  \hfill{$\square$}

\begin{remark}\label{background}
When degenerative neurons in the brain generate collective but abnormal oscillations, often occur the brain disorders such as Parkinson's disease and epilepsy. In the literature, several deep brain stimulation techniques have been developed  to treat these synchronization-induced mental disorders by eliminating synchronization in oscillatory neurons \cite{b75,b76}.  Actually, Example \ref{qutongbu} presents a mathematical model for addressing this problem.  For further details, refer to \cite{b40, b41, b53}.
\end{remark}

\begin{remark}\label{Ott}
The macroscopic dynamics \eqref{kuramoto3} for $r(t)$ is derived by using the mean-field method, specifically known as the Ott-Antonsen (OA) reduction method. We provide a concise brief about it in Section \ref{Ottantenson}. For a comprehensive understanding of the OA method, please refer to \cite{b39,b56}.
\end{remark}
\begin{remark}\label{tongbu}
The order parameter $r$ can be regarded as the \textit{centroid} of all oscillators. The values of $|r|$ vary in the interval $[0,1]$.  In the desynchronization state, the phases $\theta_i$ are uniformly distributed over the interval $[0,2\pi]$, which corresponds to a nearly zero value for $|r|$ (see Fig. \ref{fig10}, when $t>10$). Conversely, in the synchronization state, the phases $\theta_i$ are highly concentrated around a single value, leading to $|r|$ being close to $1$ (see Fig. \ref{fig10}, when $t<10$). Consequently, small values of $|r|$ signify the desynchronization state, whereas values of $|r|$ close to $1$ signify synchronization state.  Therefore, we opt to employ $|r|$  as a metric to describe the synchronization or desynchronization state for system \eqref{kuramoto}. For additional information, please refer to \cite{b39,b56}.
\end{remark} 
\end{example}
\section{Concluding remarks}\label{sec6}
Investigating the dynamics of MASs with complex networks and time delays has garnered significant interest in various real-world applications. Employing the master stability functions or the mean-field method allows us to convert these problems into lower-dimensional and linear time-delay systems.  Previous studies often use a prerequisite  that the network matrices are symmetric or/and the corresponding eigenvalues are all real.  This naturally invites the necessity of investigation on complex-valued TCEs when complex-valued eigenvalues are induced by asymmetric networks.  Additionally, previous studies also largely overlook the potential impact of memory effects in system dynamics, which arise from utilizing past information within a specific time interval. In this article, we have proposed a geometric approach for stability analysis of linear time-delay systems. \textcolor{black}{Our approach investigates the stability region of complex-valued parameters, which addresses general networks, including random and asymmetric networks.}  Furthermore, it incorporates various types of delays. Our approach allows for the design of delayed control strategies, enabling the achievement of consensus or the elimination of synchronization in multi-agent systems with general complex networks. To illustrate the effectiveness and practicality of our proposed approach, we have demonstrated its application in three representative examples.

There are still  a few unresolved open problems in this area of study.

Firstly, as illustrated in Figs. \ref{fig15} and \ref{fig18}, the stability region, denoted by $\Omega_T$, satisfies the condition $\Omega_{T_1}\subseteq\Omega_{T_2}$ for $T_1>T_2$. This implies that time delay has a negative impact on the consensus of systems \eqref{carfollowing2} and \eqref{MAS} for certain parameters. However, in some cases, time delay may actually enhance the consensus/stability in MASs under specific conditions. For example, if the parameters are set as $a=b=k_1=1$ and $k_2=2$ in system \eqref{MAS}, the stability region $\Omega_T$ does not shrink as $T$ increases.  As shown in Fig. \ref{fig26}, if some eigenvalues of the network matrix lie within the green-shaded regions, an increase in $T$ may lead to the consensus/stability even if it is not achieved at $T=0$. Some remarkable work has been done on the positive impact of time delay in \cite{b44,b49,b72}. It is worth examining this direction further in future studies.

\begin{figure}
\begin{center}
\centering
\includegraphics[width=0.5\textwidth]{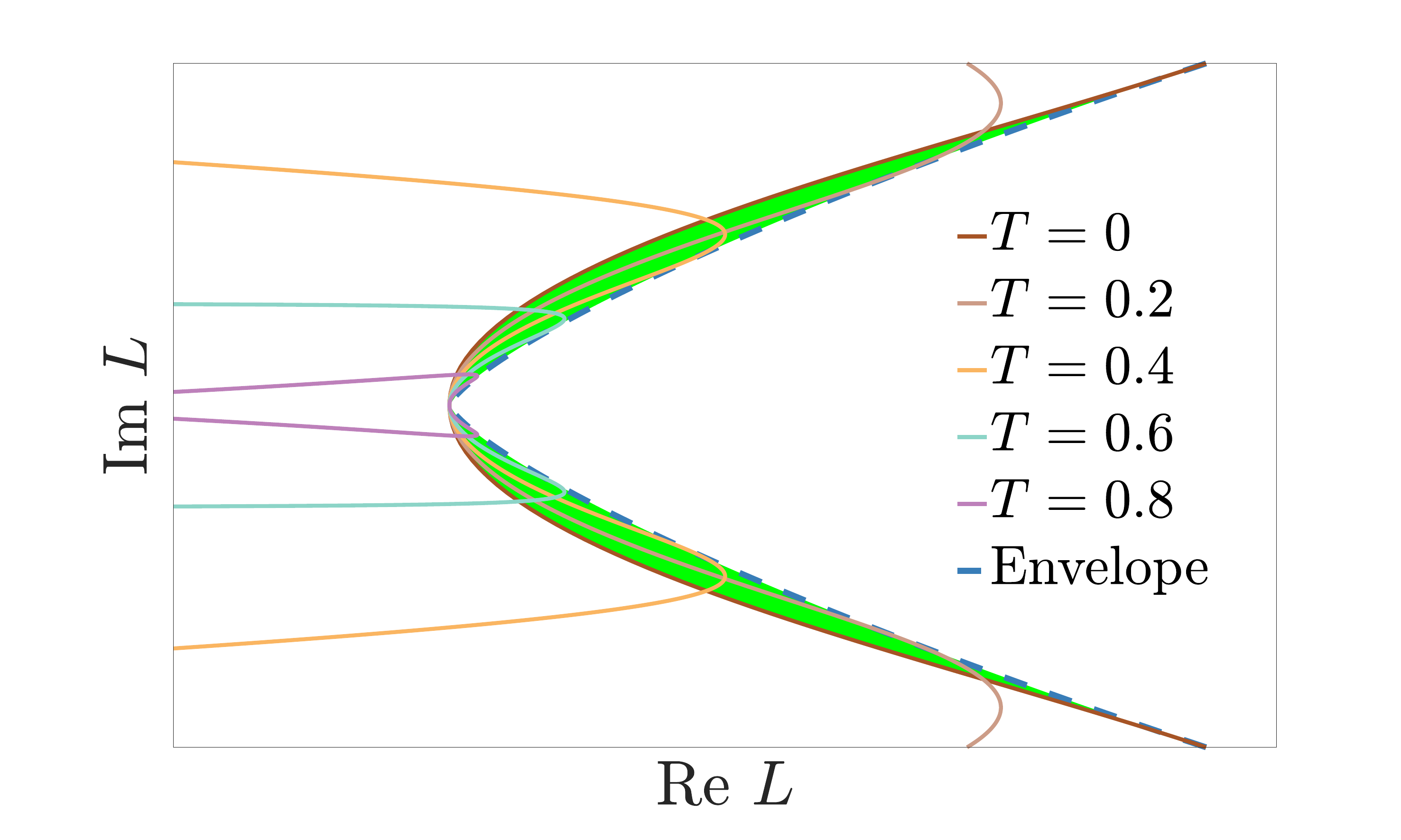}
\caption{The stability region  $\Omega_T$ for Eq. \eqref{MAS2} does not shrink as $T$ increases.  Here, the solid curves with different colors indicate the SCCs according to \eqref{MAS3} for different $T$, while the light blue dashed curve indicates the envelope of the SCC family. As some eigenvalues of the network matrix lie within the green-shaded regions, increasing $T$ may lead to the consensus/stability even if it is not achieved at $T=0$. Here, the parameters are set as $a=b=k_1=1$ and $k_2=2$. } \label{fig26}
\end{center}
\end{figure}

Secondly, it is worthwhile to mention that the root continuity argument (Theorem \ref{theorem1}) is {\it not} obvious, dependent on the specific form of the original system. To illustrate this, we investigate a scalar system with the delayed PD control protocol, which reads:
$$
\dot{z}=az+L[z(t-\tau)+\dot{z}(t-\tau)],
$$
which cannot be expressed in the form of system \eqref{2}. Applying Rouch\'{e}'s Theorem\cite[Chapter 3, Theorem 4.3]{b71}, when $|L|>1$, the TCE $(\lambda-a)/(1+\lambda)=L{\rm e}^{-\lambda\tau}$ has infinitely many roots in $\mathbb{C}_+$ {at infinity}. Consequently, as the parameter $L$ moves from $|L|\leq 1$ to $|L|>1$, or the parameter $\tau$ moves from $\tau=0$ to $\tau>0$, infinitely many roots emerge in $\mathbb{C}_+$ suddenly.  This implies that 
${\rm NU}(\cdot)$ \textit{may change its value even without roots appearing on the imaginary axis $\mathbb{C}_0$}.  This further indicates that these changes may not occur at the SCCs. In such scenarios, our geometric approach cannot be employed. It is valuable to explore scenarios in which our geometric approach can be employed when the original system is not in the form of system \eqref{2}.



Thirdly, our approach primarily centers on time-invariant systems. However, it is important to note that time delays in many real-world systems are actually time-varying. In recent research, remarkable progress has been made in the stability analysis of continuous-time systems with stochastic delays \cite{b57, b58,b73,b74}.  Thus, an application of our approach to investigate the time-varying systems is a prospective avenue for further study. 

Fourthly, our approach places particular emphasis on the stability of linear systems with control delays. Another interesting direction for future study involves solving the optimal control problem for linear distributed time-delay systems, which has been addressed in \cite{b59}. This direction also holds considerable potential for further investigation.

Finally, our approach focuses on ODE-based systems incorporating delays.  Notably, there has been significant research conducted on PDE-based systems with delays in recent times \cite{b77}. This certainly presents a promising avenue for future investigation as well.

\section{Appendix}

\subsection{Master stability function}\label{MSF}

In this subsection, we provide an overview of the master stability function (MSF)\cite{b54}. The MSF serves as a valuable tool in decomposing multi-agent systems into two distinct components: one that evolves alongside the synchronization manifold and another that evolves orthogonal to it. If the latter component demonstrates asymptotic stability, it guarantees the consensus/synchronization among the set of multi-agents. As a consequence, the MSF proves to be a potent instrument in investigating consensus within multi-agent systems.

Consider the following system with $N$ agents
\begin{equation}\label{01222103}
\dot{\bm{x}_i}=\bm{Q}\bm{x}_i+\sum_{j\neq i}^Na_{ij}[\bm{x}_j(t-\tau)-\bm{x}_i(t-\tau)],~ i=1,\cdots,N.
\end{equation}
Here, each $\bm{x}_i\in\mathbb{R}^q$ denotes the dynamics of agent $i$. It can be written as
\begin{equation}\label{01222104}
\dot{\bm{X}}=\bigg(\bm{I}_N \otimes \bm{Q}\bigg)\bm{X}+\bigg(\bm{J} \otimes \bm{I}_q\bigg) \bm{X}(t-\tau) \\
\end{equation}
where ``$\otimes$'' is the Kronecker product and $\bm{X}(t)\triangleq\left[\bm{x}_1^{\rm T}(t), \cdots, \bm{x}_N^{\rm T}(t)\right]^{\rm T}$. The diagonal elements of the network matrix $\bm{J}\triangleq\{a_{ij}\}_{N\times N}$ are defined as $a_{ii}\triangleq-\sum_{j=1, j\neq i}^Na_{ij}$.

We utilize a complex Schur transformation\cite[Theorem 8.9]{b70}, where a $N-$dimensional complex unitary matrix $\bm{P}$ is introduced such that $\bm{U}=\bm{P}^{-1}\bm{J}\bm{P}$, with $\bm{U}$ being upper triangular. The complex eigenvalues $\mu_1, \cdots, \mu_{N}$ of $\bm{J}$ are positioned along the main diagonal of $\bm{U}$. Note that the row sums of $\bm{J}$ are zero, at least one eigenvalue of $\bm{J}$ is zero. We assume that $\mu_1=0$. By introducing the transformation $\bm{Z}=\bigg(\bm{P} \otimes \bm{I}_q\bigg)^{-1} \bm{X}$, system \eqref{01222104} becomes
\begin{equation}\label{01222101}
\begin{aligned}
\dot{\bm{Z}}=&\bigg(\bm{P} \otimes \bm{I}_q\bigg)^{-1}\bigg(\bm{I}_N \otimes \bm{Q}\bigg)\bigg(\bm{P} \otimes \bm{I}_q\bigg)\bm{Z}\\
&+\bigg(\bm{P} \otimes \bm{I}_q\bigg)^{-1}\bigg(\bm{J} \otimes \bm{I}_q\bigg)\bigg(\bm{P} \otimes \bm{I}_q\bigg)\bm{Z}(t-\tau)\\
=&\bigg(\bm{I}_N \otimes \bm{Q}\bigg)\bm{Z}+\bigg(\bm{U} \otimes \bm{I}_q\bigg)\bm{Z}(t-\tau).
\end{aligned}
\end{equation}
Due to the block-diagonal structure of $\bm{I}_N \otimes \bm{Q}$ and the upper triangular structure of $\bm{U}$, the stability of system \eqref{01222101} is equivalent to the stability of the subsystems
\begin{equation}\label{01222102}
\dot{\bm{z}}_k=\bm{Q}\bm{z}_k+\mu_k\bm{z}_k(t-\tau), ~~ k=1,\cdots,N.
\end{equation}
Here, $\bm{Z}(t)\triangleq\left[\bm{z}_1^{\rm T}(t), \cdots, \bm{z}_N^{\rm T}(t)\right]^{\rm T}$. Since $\mu_1=0$, we have
$\dot{\bm{z}}_1=\bm{Q}\bm{z}_1$
evolves along the synchronization manifold. In contrast, system \eqref{01222102} with $k=2, \cdots, N$ evolves transversely to the synchronization manifold \cite{b54}.  Therefore, the multi-agent systems achieve the consensus (i.e. $\lim_{t\to+\infty}\|\bm{x}_i(t)-\bm{x}_j(t)\|=0$ for $1\leq i<j\leq N$) if and only if system \eqref{01222102} with $k=2, \cdots, N$ is stable.  In other words, the consensus can be achieved if all the eigenvalues of the matrix $\bm{J}$ (except for $0$) are located in the stability region $$\Omega\triangleq \left\{L\in\mathbb{C}~\Big|~\dot{\bm{z}}=\bm{Q}\bm{z}+L\bm{z}(t-\tau)~\mbox{is stable} \right\}.$$

\subsection{Mean-field method} \label{Ottantenson}

In this subsection, we provide an overview of the mean-field method, which is commonly known as the OA reduction method\cite{b39,b56}, to obtain the macroscopic dynamics \eqref{kuramoto3} for $r(t)$ using system \eqref{kuramoto} with the controller \eqref{kuramoto2}.

In the continuum limit $N \rightarrow \infty$, the state of the oscillator system at time $t$ can be described by a continuous distribution function $f(\omega, \theta, t)$, in terms of frequency $\omega$ and phase $\theta$, for the problems in system \eqref{kuramoto}, where
$$
\int_0^{2 \pi} f(\omega, \theta, t){\rm d}\theta=g(\omega).
$$
In this case, the ``mean-field'' order parameter is written as:
\begin{equation}\label{1648}
r(t)=\int_{-\infty}^{\infty} \int_0^{2 \pi} f(\omega, \theta, t) {\rm e}^{{\rm i} \theta} {\rm d}\theta {\rm d}\omega.
\end{equation}
Note that system \eqref{kuramoto} can be expressed in terms of the order parameter as:
$$
\dot{\theta}_i(t)=\omega_i+K\operatorname{Im}\left[r{\rm e}^{-{\rm i} \theta_i(t)}\right]+u_i(t), ~~ i=1,2,\cdots, N.
$$
Meanwhile, the controller \eqref{kuramoto2} can be expressed in terms of an ``order parameter" $\eta_i$ as:
\begin{equation}\label{1539}
u_i(t)=C\operatorname{Im}\left[\eta_i {\rm e}^{-{\rm i} \theta_i(t)}\right]+S\operatorname{Re}\left[\eta_i {\rm e}^{-{\rm i} \theta_i(t)}\right],
\end{equation}
where
$$
\eta_i(t)\triangleq N^{-1} \sum_{j=1}^N {\rm e}^{{\rm i}\theta_j(t-\tau_{ij})} .
$$
We assume that all delays $\tau_{ij}$ obey the distribution $h(\tau)$ and are uncorrelated with the oscillator frequencies $\omega$ at either end of the link. Therefore, we obtain
\begin{equation}\label{1709}
\eta_i(t)\approx\eta(t)\triangleq\int_0^{\infty} r(t-\tau) h(\tau) {\rm d}\tau.
\end{equation}
Using the continuity equation, we obtain the evolution of oscillator distribution function $f(\theta, \omega, t)$ as follows:
\begin{equation}\label{1631}
\begin{aligned}
\frac{\partial}{\partial t} f+\frac{\partial}{\partial \theta}\Bigg\{\Big[\omega+&\frac{K}{2{\rm i}}\left({\rm e}^{-{\rm i}\theta} r-{\rm e}^{{\rm i}\theta}\overline{r}\right)\\
+\frac{C}{2{\rm i}}\left({\rm e}^{-{\rm i} \theta}\eta-{\rm e}^{{\rm i} \theta}\overline{\eta}\right)+&\frac{S}{2}\left({\rm e}^{-{\rm i}\theta}\eta+{\rm e}^{{\rm i}\theta}\overline{\eta}\right)\Big] f\Bigg\}=0.
\end{aligned}
\end{equation}
Writing $f(\omega, \theta, t)$ in a Fourier series yields:
$$
f(\omega, \theta, t)=\frac{g(\omega)}{2 \pi}\left\{1+\sum_{n=1}^{\infty}\left[f_n(\omega, t) {\rm e}^{{\rm i}n\theta}+\overline{f_n(\omega, t)}{\rm e}^{-{\rm i}n\theta}\right]\right\}.
$$
Following the method outlined in \cite{b39,b56}, we consider the dynamics \eqref{1631} on an invariant manifold:
\begin{equation}\label{1632}
f_n(\omega, t)=[a(\omega, t)]^n .
\end{equation}
The macroscopic dynamics of $a(\omega, t)$ is derived by substituting Eq.~\eqref{1632} into Eq.~\eqref{1631}, which further leads to
\begin{equation}\label{1651}
\dfrac{\partial a}{\partial t}+{\rm i}\omega a+\dfrac{K}{2}\left(r a^2-\overline{r}\right)+\left({L}\eta a^2-\overline{L\eta}\right)=0,
\end{equation}
where $L\triangleq (C+{\rm i}S)/2$. The oscillator frequency distribution $g(\omega)$ is assumed to obey
$g(w)=\frac{1}{\pi}\frac{1}{(w-d)^2+1}$.
We assume that $a(\omega,t)$ is analytic with respect to $\omega$ in the lower half complex plane. By the Residue Formula\cite[Chapter 3, Theorem 2.1]{b71}, we obtain that
\begin{equation}\label{1652}
r(t)=\int_{-\infty}^{\infty} g(\omega) \overline{a(\omega, t)} {\rm d}\omega=\overline{a(d-{\rm i},t)}.
\end{equation}
Here, the first equality is obtained by substituting Eq.~\eqref{1632} into Eq.~\eqref{1648}. Furthermore, by setting $\omega\triangleq d-{\rm i}$ and $a\triangleq a(d-{\rm i},t)$ in Eq.~\eqref{1651}, we have
\begin{equation}\label{1653}
\dfrac{{\rm d}a}{{\rm d}t}+{\rm i}(d-{\rm i})a+\dfrac{K}{2}\left(r a^2-\overline{r}\right)+\left({L}\eta a^2-\overline{L\eta}\right)=0.
\end{equation}
Substituting Eqs.~\eqref{1709} and \eqref{1652} into Eq.~\eqref{1653}, and then taking the conjugate of both sides give  Eq.~\eqref{kuramoto3}.

\subsection{Circular Law}\label{CL}

In this subsection, we provide an overview of the Circular Law, which is a useful tool for approximating the eigenvalue distribution of large random network matrices.

\begin{theorem}\label{CL1}
(Circular Law, \cite[Theorem 1.10]{b100}) Consider an $N\times N$ complex random matrix $\Xi_N$ whose entries are mutually independent and identically distributed copies of a complex random variable with zero mean and finite variance $\sigma^2$.   Further let $\hat{\mu}_1,...,\hat{\mu}_N$ be the eigenvalues of $\hat{\bm{\Xi}}_N \triangleq \bm{\Xi}_N/\sigma\sqrt N$. 
The empirical spectral distribution (ESD) $\delta_N$ of $\hat{\bm{\Xi}}_N$ is defined as:
\begin{equation*}
\delta_N(x,y) \triangleq \frac{1}{N} \#\left\{k\leq N ~\big|~ {\rm Re}\{\hat{\mu}_k\} \leq x,~ {\rm Im}\{\hat{\mu}_k\}\leq y\right\}.
\end{equation*}
As $N\to+\infty$, the ESD $\delta_N$ converges, both in probability and in an almost sure sense, to the uniform distribution on the unit disk $\delta_{\text{cir}}$, defined as:
\begin{equation*}
	\delta_{\text{cir}}(x,y) \triangleq \frac{1}{\pi} \text{mes}\left(
	\left\{z\in \mathbb{C}~\big|~ |z|\leq 1, ~{\rm Re}\{z\} \leq x,~ {\rm Im} \{z\}\leq y\right\}\right).
\end{equation*}
\end{theorem}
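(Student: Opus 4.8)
The plan is to follow the logarithmic-potential (Hermitization) route, which is the backbone of the modern proofs of the circular law, including the one in \cite{b100}. Let $\mu_N$ be the empirical spectral measure of $\hat{\bm{\Xi}}_N$ on $\mathbb{C}\cong\mathbb{R}^2$ (so that its bivariate CDF is $\delta_N$), and let $\mu_{\mathrm{cir}}$ be the uniform probability measure on the unit disk (CDF $\delta_{\mathrm{cir}}$). The first ingredient is a general principle from potential theory: if the logarithmic potentials $U_{\mu_N}(z)\triangleq-\int_{\mathbb{C}}\log|z-w|\,\mathrm{d}\mu_N(w)$ converge, for Lebesgue-almost every $z$, to $U_{\mu_{\mathrm{cir}}}(z)$ (in probability, resp.\ almost surely), and $\{\mu_N\}$ is tight, then $\mu_N\to\mu_{\mathrm{cir}}$ weakly in probability, resp.\ almost surely. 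Tightness is easy, since $\int|w|^2\,\mathrm{d}\mu_N(w)=\tfrac1N\|\hat{\bm{\Xi}}_N\|_{\mathrm{HS}}^2\to1$ almost surely by the law of large numbers. So the first step is to rewrite, for fixed $z$ off the spectrum,
\[
U_{\mu_N}(z)=-\frac1N\log\bigl|\det(\hat{\bm{\Xi}}_N-z\bm{I}_N)\bigr|=-\frac1N\sum_{j=1}^N\log s_j(z)=-\int_0^{\infty}\log x\,\mathrm{d}\nu_{N,z}(x),
\]
where $s_1(z)\ge\cdots\ge s_N(z)\ge0$ are the singular values of $\hat{\bm{\Xi}}_N-z\bm{I}_N$ and $\nu_{N,z}$ is their empirical distribution. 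This reduces the problem to the \emph{Hermitian} matrices $(\hat{\bm{\Xi}}_N-z\bm{I}_N)(\hat{\bm{\Xi}}_N-z\bm{I}_N)^{*}$, equivalently Girko's $2N\times 2N$ symmetrization with off-diagonal blocks $\hat{\bm{\Xi}}_N-z\bm{I}_N$ and $(\hat{\bm{\Xi}}_N-z\bm{I}_N)^{*}$.

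Second, for each fixed $z$ I would show that $\nu_{N,z}$ converges weakly to an explicit deterministic measure $\nu_z$. This is a standard self-adjoint random-matrix computation: letting $m_{N,z}$ be the Stieltjes transform of the symmetrized singular-value distribution and using Schur-complement/resolvent identities together with martingale concentration, one obtains a self-consistent quadratic (Marchenko--Pastur-type) equation whose unique solution in the upper half plane determines $\nu_z$; a residue computation then gives $\int_0^{\infty}\log x\,\mathrm{d}\nu_z(x)=-U_{\mu_{\mathrm{cir}}}(z)$, namely $\log|z|$ for $|z|\ge1$ and $\tfrac12(|z|^2-1)$ for $|z|\le1$. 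As an alternative to this direct fixed-point analysis, one may instead establish the Hermitian part first for the complex Gaussian (Ginibre) ensemble, where the joint law of the eigenvalues is explicitly known and the circular law can be read off, and then transfer by a Lindeberg replacement argument.

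Third --- and this is where the real work is --- weak convergence $\nu_{N,z}\to\nu_z$ does \emph{not} by itself give $\int\log x\,\mathrm{d}\nu_{N,z}(x)\to\int\log x\,\mathrm{d}\nu_z(x)$, because $x\mapsto\log x$ is unbounded near $0$. One needs uniform integrability at the origin, for which two quantitative inputs are required: (i) a polynomial lower bound $s_N(z)\ge N^{-B}$ with probability $1-o(1)$ on the \emph{least} singular value of $\hat{\bm{\Xi}}_N-z\bm{I}_N$; and (ii) a count $\tfrac1N\#\{j:\ s_j(z)\le N^{-c}\}=o(1)$ on the number of small singular values, i.e.\ a local estimate for $\nu_{N,z}$ near $0$. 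Item (ii) follows from a local-law-type bound for the symmetrized ensemble or from an interlacing/rank argument. Item (i) is the hard part: it is the invertibility problem for a shifted i.i.d.\ matrix, controlled via anti-concentration (Littlewood--Offord / small-ball) inequalities in the spirit of Rudelson--Vershynin and Tao--Vu, and it is precisely here that the i.i.d.\ structure and the minimal finite-variance hypothesis are used.

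Finally, combining the weak convergence of $\nu_{N,z}$ with this uniform integrability yields $U_{\mu_N}(z)\to U_{\mu_{\mathrm{cir}}}(z)$ for a.e.\ fixed $z$, first in probability; truncating the matrix entries at a scale $N^{\varepsilon}$ (to reduce to a model with controlled moments, the correction being bounded by a rank/Hoffman--Wielandt estimate) together with a Borel--Cantelli / subsequence argument upgrades this to almost-sure convergence; the potential-theoretic principle above then gives $\delta_N\to\delta_{\mathrm{cir}}$ in both senses. I expect the dominant obstacle to be step (i), the quantitative lower bound on the smallest singular value of $\hat{\bm{\Xi}}_N-z\bm{I}_N$: it is the technically deepest ingredient, it is what makes the finite-variance hypothesis (rather than a higher-moment condition) suffice, and without it the whole logarithmic-potential scheme collapses.
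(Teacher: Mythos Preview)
The paper does not prove this theorem at all: it is stated in the appendix purely as a quoted result, with the explicit attribution ``Circular Law, \cite[Theorem 1.10]{b100}'', and is then applied as a black box to locate the eigenvalues of the random matrix $\bm{J}=-R\bm{I}_N+\alpha\bm{\Xi}_N$ in Example~\ref{MAS0}. There is therefore no ``paper's own proof'' to compare against.

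That said, your outline is a faithful and correct sketch of the Hermitization / logarithmic-potential scheme (Girko's reduction plus the Tao--Vu least-singular-value input) that underlies the result cited as \cite{b100}, so if the intent was to supply the missing argument, you have identified the right architecture and, in particular, correctly flagged the smallest-singular-value bound as the crux. For the purposes of this paper, however, no proof is expected: a one-line citation suffices.
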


In Example \ref{MAS0}, each entry $\xi_{ij}$ of the matrix $\bm{\Xi}_N$ is independently sampled from a uniform distribution within the interval $[-1,1]$, which indicates that $\sigma^2=1/3$. According to Theorem \ref{CL1}, for sufficiently large $N$, the eigenvalues of $\hat{\bm{\Xi}}_N \triangleq \bm{\Xi}_N/\sigma\sqrt N$ are approximated as uniformly distributed in the unit circle $\left\{z\in \mathbb{C}~\big|~|z|=1\right\}$. Furthermore, this implies that the eigenvalues of $\bm{J}=-R\bm{I}_N+\alpha\sigma\sqrt{N}\hat{\bm{\Xi}}_N $ are approximated as uniformly distributed within a circle obeying Eq. \eqref{1057}.

\bibliographystyle{IEEEtran}
\bibliography{brief_revised}


{\small

\begin{wrapfigure}{l}[0cm]{0pt}
   \includegraphics[width=2.5cm]{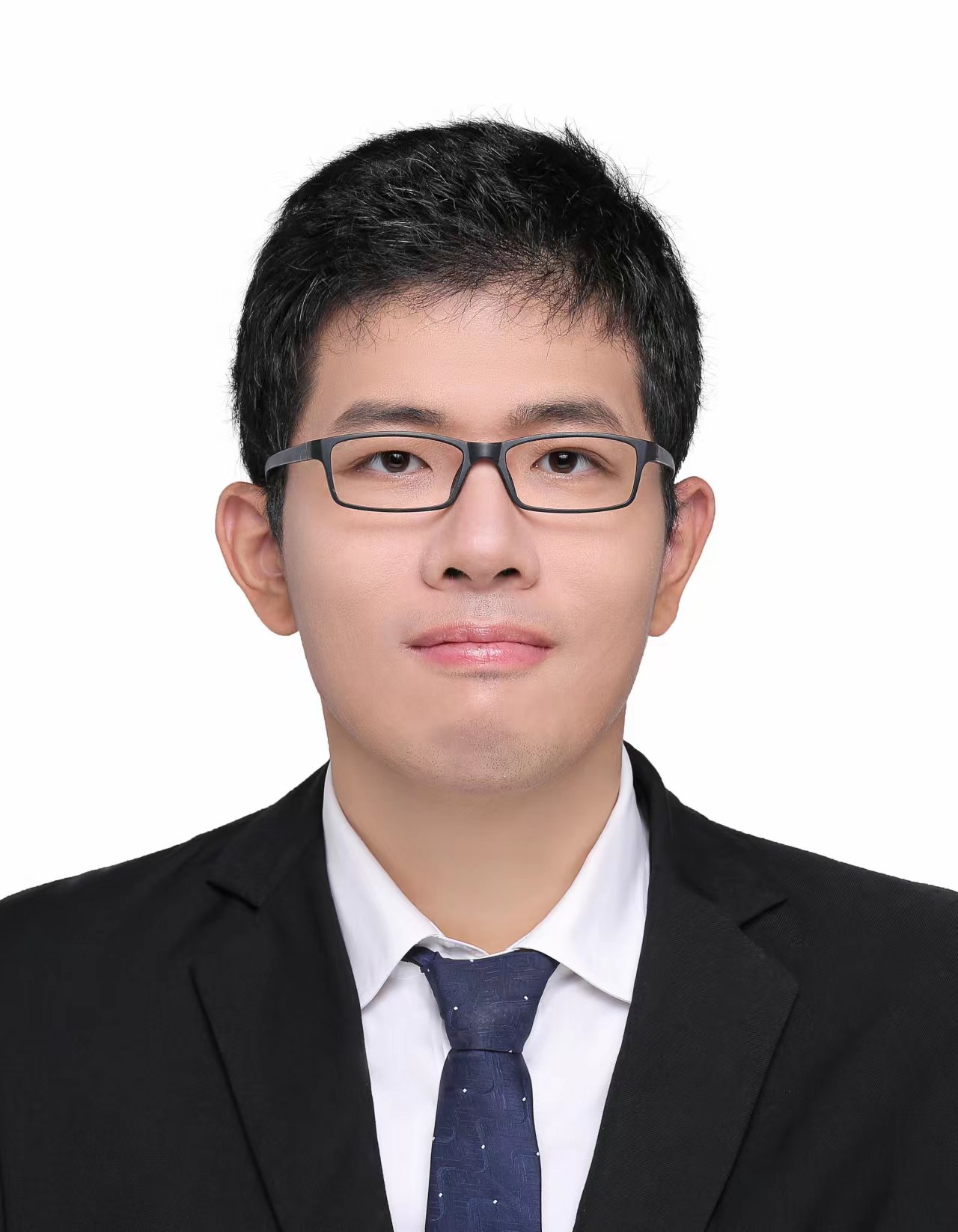}
\end{wrapfigure}

\noindent\textbf{Shijie Zhou} was born in Shanghai, China, in 1992. He received the B.S. and the Ph.D. degrees in applied mathematics from Fudan University, Shanghai, China, in 2014 and 2019, respectively. He was a postdoctor researcher in York University in Canada from 2020 to 2022.
He currently is a young investigator at the Research Institute of Intelligent Complex Systems, Fudan University, Shanghai, China. 

His current research interests include complex networks, stochastic differential equations, randomly-switching systems, and their applications to computational neuroscience.   His contributions have been published in prestigious journals in IEEE, SIAM, AIP, IOP, and Physical Review.

~~

~
~

~~
~~

~
~

~~

\begin{wrapfigure}{l}[0cm]{0pt}
   \includegraphics[width=2.5cm]{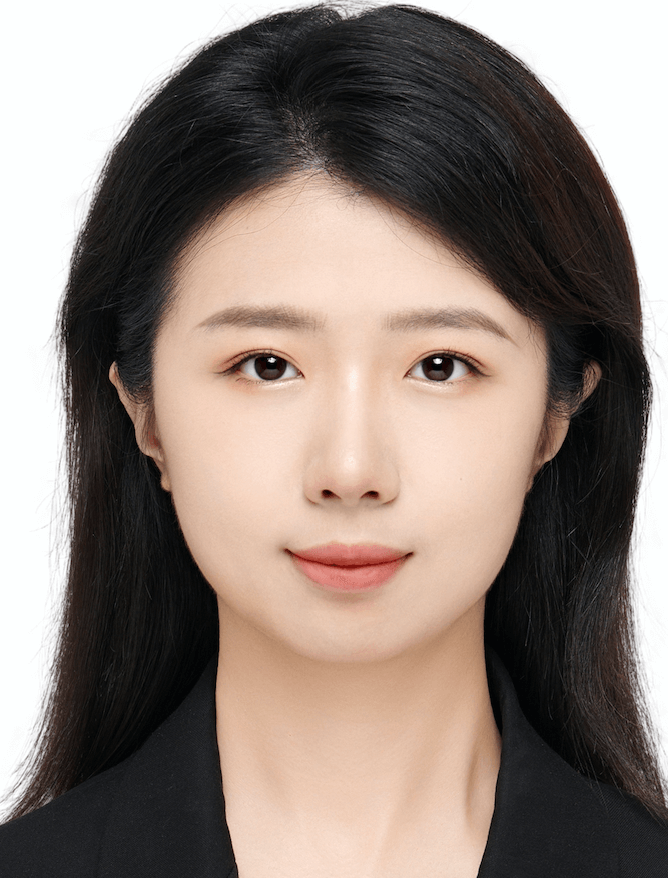}
\end{wrapfigure}

 \noindent\textbf{Luan Yang} received her M.S. degree in applied mathematics from Fudan University, Shanghai, in 2023. She is currently pursuing her Ph.D. degree at the Research Institute of Intelligent Complex Systems, Fudan University. Her research is primarily focused on complex systems, neural dynamics, and neuroscience. Her scholarly work has been published in esteemed journals under AIP and IEEE.

~~

~
~

~~
~~
~~

~
~

~~
~~
~
~

~~
~~
~
~

~~

~~

\begin{wrapfigure}{l}[0cm]{0pt}
   \includegraphics[width=2.5cm]{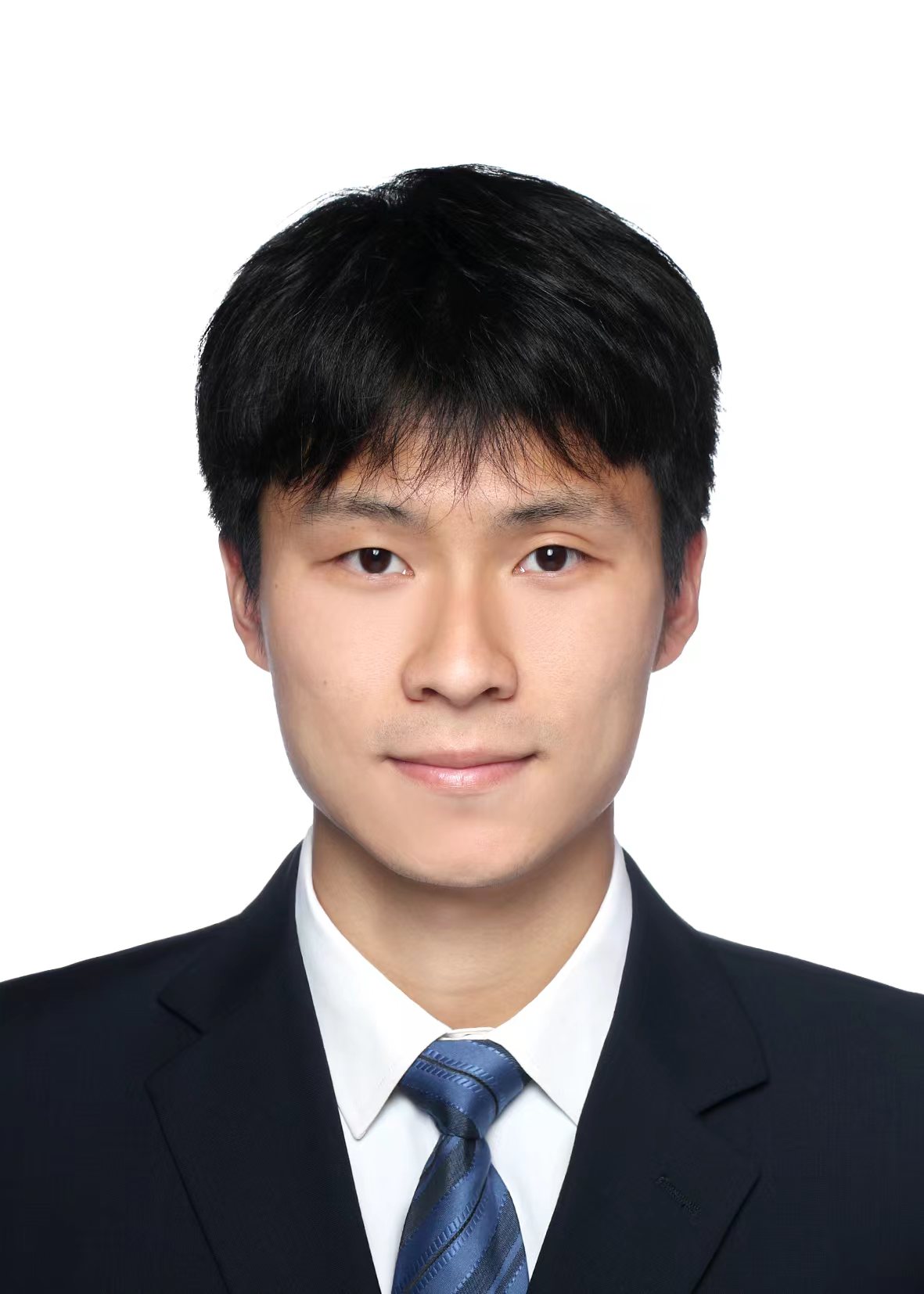}
\end{wrapfigure}

 \noindent\textbf{Xuzhe Qian}  received his B.S. degree in applied mathematics from Fudan University, Shanghai, China, in 2021. He is currently a Ph.D. student at the School of Mathematical Sciences, Fudan University. His research interests include  the collective behavior of complex systems, delay dynamical systems and computational social sciences.

~~

~
~

~
~

~~

~~

~~

~
~

~~

\begin{wrapfigure}{l}[0cm]{0pt}
\includegraphics[width=2.5cm]{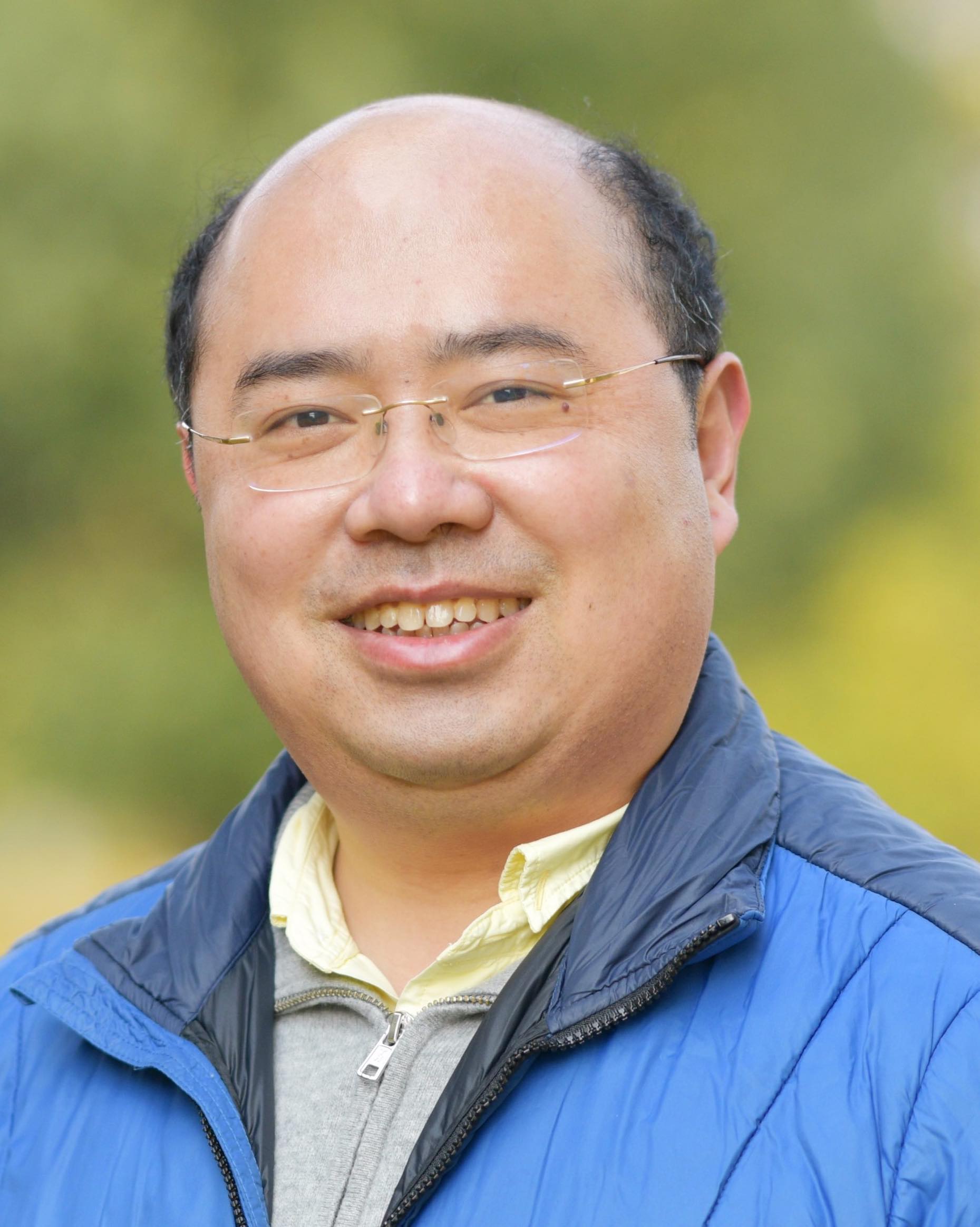}
\end{wrapfigure}

\noindent\textbf{Wei Lin}  received the Ph.D. degree in applied mathematics from Fudan University, Shanghai, China, in January 2003, with a specialization in nonlinear dynamical systems and artificial neural networks.

Since December 2009, he has been a Full Professor in applied mathematics with Fudan University. He is currently serving as the Dean of the Research Institute of Intelligent Complex Systems, the Vice Dean of the School of Data Science, and the Director of the Centre for Computational Systems Biology, Fudan University. From 2008 to 2013, he held a staff scientist position with the CAS-MPG Partner Institute for computational biology, Shanghai, China. His current research interests include bifurcation and chaos theory, stability and oscillations in hybrid systems, stochastic systems and complex networks, data assimilation, causality analytics, and all their applications to computational systems biology and artificial intelligence. His major contributions have been published in prestigious journals and conference proceedings, including PRL, PNAS, Nature Communications, Nature Physics, IEEE TAC, IEEE TNN, SIAM Journal on Control and Optimization, ICLR, NeurIPS, and AAAI.

Dr. Lin is the Senior Member of IEEE, the Vice Chair of the CSIAM Committee on Mathematical Life Science, the General Secretary of the Shanghai Society of Nonlinear Sciences, the Board Member of the International Physics and Control Society, the AE or Editor for IJBC/Research/CSF, and the member of the Editorial Advisory Board of CHAOS. He received the Excellent Young Scholar Fund and the Outstanding Young Scholar Fund from NSFC in 2013 and 2019, respectively, and he was selected as the Chief Scientist of the National Key R\&D Program of China. He was awarded as a Highly Cited Chinese Researcher in General Engineering according to Elsevier from 2015 to 2019.
He was a recipient of the Best Paper Prize from the International Consortium of Chinese Mathematicians in 2019, a second recipient of the First Prize of the Shanghai Natural Science Awards in 2020, and a recipient of the V. Afraimovich Award for outstanding young scholars in Nonlinear Physical Science in 2024.

~

}





~~



~
~~

~~

~~

~~

~~

~~


\end{document}